\documentclass[12pt,a4paper]{amsart}
\usepackage{mathrsfs}
\usepackage{amssymb,amsmath,amsthm,color}
\usepackage{graphicx,mcite}
\usepackage{hyperref}
\usepackage{url}
\usepackage{setspace}
\usepackage{enumerate}
\textheight=8.5in \textwidth=5.5in

\newtheorem{theorem}{Theorem}[section]
\newtheorem{lemma}[theorem]{Lemma}
\newtheorem{proof of lemma}[theorem]{Proof of Lemma}
\newtheorem{proposition}[theorem]{Proposition}

\theoremstyle{definition}
\newtheorem{definition}[theorem]{Definition}

\newtheorem{remark}[theorem]{Remark}

\numberwithin{equation}{section}




\begin{document}

\title[Heisenberg uniqueness pair]
{Heisenberg uniqueness pairs for the Fourier transform on the Heisenberg group}

\author{Somnath Ghosh and R.K. Srivastava}

\address{Department of Mathematics, Indian Institute of Technology, Guwahati, India 781039.}
\email{gsomnath@iitg.ac.in, rksri@iitg.ac.in}

\subjclass[2000]{Primary 42A38; Secondary 44A35}

\date{\today}

\keywords{Convolution, Fourier transform, Heisenberg group, Laguerre polynomial.}

\begin{abstract}
In this article, we prove that (unit sphere, non-harmonic cone) is a Heisenberg
uniqueness pair for the symplectic Fourier transform on $\mathbb C^n.$ We
derive that spheres as well as non-harmonic cones are determining sets for the
spectral projections of the finite measure supported on the unit sphere.
Further, we prove that if the Fourier transform of a finitely supported
function on step two nilpotent Lie group is of arbitrary finite rank,
then the function must be zero. The latter result correlates to the
annihilating pair for the Weyl transform.
\end{abstract}

\maketitle

\section{Introduction}\label{section1}

Let $\Gamma$ be a finite disjoint union of smooth curves in the plane $\mathbb R^2.$
Let $X(\Gamma)$ be the space of all finite complex-valued Borel measures
$\mu$ in $\mathbb R^2,$ which are supported on $\Gamma$ and absolutely
continuous with respect to the arc length measure of $\Gamma$. For
$(\xi,\eta)\in\mathbb R^2,$ the Fourier transform of $\mu$ can be defined by
\[\hat\mu{( \xi,\eta)}=\int_\Gamma e^{-i(x\cdot\xi+ y\cdot\eta)}d\mu(x,y).\]
In the above context, $\hat\mu$ becomes uniformly continuous bounded function.
Thus, we can analyze the pointwise vanishing property of $\hat\mu.$
\begin{definition}
Let $\Lambda$ be a set in $\mathbb R^2.$ The pair $\left(\Gamma, \Lambda\right)$
is called a Heisenberg uniqueness pair for $X(\Gamma)$ if the only $\mu\in X(\Gamma)$
that satisfies $\hat\mu\vert_\Lambda=0$ is $\mu=0.$
\end{definition}

\smallskip

In general, the problem of Heisenberg uniqueness pair (HUP) is a question
about the determining property of the finite Borel measures, which are supported
on some given lower dimensional entities whose Fourier transform also vanishes on lower
dimensional entities. In particular, if $\Gamma$ is compact, then by Paley Wiener
theorem $\hat\mu$ is real analytic having exponential growth, and hence $\hat\mu$
can vanish only on a set of measure zero. Thus, the HUP problem becomes a little
easier in this case. However, this problem becomes immensely difficult while the
measure is supported on a non-compact entity. Further, it appears that the problem
of HUP is a natural variant of the uncertainty principle for the Fourier
transform.

\smallskip

In addition, the concept of determining the Heisenberg uniqueness pair has a
correlation with the concept of annihilating pair. Let $A\subseteq \mathbb{R}$ and
$\Sigma\subseteq \hat{\mathbb{R}}$ be measurable subsets. Then the pair $(A,\Sigma)$
is called a {\em weak annihilating pair} (WAP) if $\text{supp}f\subseteq A$ and
$\text{supp}\hat{f}\subseteq \Sigma,$ implies $f=0.$ The pair $(A,\Sigma)$
is called a {\em strong annihilating pair} (SAP) if there exists a positive number
$C=C(A,\Sigma)$ such that
\begin{align}\label{exp29}
\|f\|_2^2\leq C\left(\|f\|_{L^2{(A^c)}}+\|\hat{f}\|_{L^2{(\Sigma^c)}}\right)
\end{align}
for all $f\in L^2(\mathbb{R}).$ It is obvious from (\ref{exp29}) that every SAP is
a WAP. In \cite{B}, Benedicks had, eventually, proved that $(A,\Sigma)$ is a WAP
if $A$ and $\Sigma$ both have finite measure, whereas it follows from Amrein-Berthier
\cite{AB} that $(A,\Sigma)$ is a SAP under the identical assumption as in \cite{B}.
For the latter conclusion we refer to \cite{HJ}. Later, various analogues of
Benedicks-Amrein-Berthier theorem and qualitative uncertainty principle,
have been investigated in different setups (see \cite{BD, FS, HJ, H, NR, PS, SST}).

\smallskip

In \cite{NR}, Narayanan and Ratnakumar proved that if $f\in L^1(\mathbb{H}^n)$ is
supported on $B\times \mathbb{R}$, where $B$ is a compact subset of $\mathbb{C}^n$,
and if $\hat{f}(\lambda)$ is of finite rank for each $\lambda,$ then $f=0.$ Thereafter,
Vemuri \cite{V} replaced the compactness condition on $B$ by finite measure.
In \cite{CGS}, authors considered $B$ as a rectangle in $\mathbb R^{2n}$ to prove
analogous result in step two nilpotent Lie group.

\smallskip

In this article, we prove the result on the general step two nilpotent Lie group
when $B$ is an arbitrary set of finite measure, using the Hilbert space theory.
Though specifying the appropriate set of projections in the general setups was
a major bottleneck, and we sorted out the same. For the sake of simplicity, we prefer
to prove the result for the case of the Heisenberg group as the same technique can be
extended for the general step two nilpotent Lie group. Finally, we consider
the case of SAP for the Weyl transform and draw some conclusions therein.

\smallskip

We discuss the concept of HUP, which was introduced by Hedenmalm
and Montes-Rodr\'iguez in \cite{HR}, and then inn \cite{HR}, they have shown that the
pair (hyperbola, some discrete set) is a Heisenberg uniqueness pair. As a dual
problem, a weak$^\ast$ dense subspace of $L^{\infty}(\mathbb R)$  has been
constructed to solve the long standing  Klein-Gordon equation. Further,
Hedenmalm and Montes-Rodr\'iguez \cite{HR} have given a complete characterization
of the Heisenberg uniqueness pairs corresponding to any two parallel lines.

\smallskip

Lev \cite{L} and Sj\"{o}lin \cite{S1} have independently shown that
circle and certain system of lines form HUP corresponding to the unit circle.
Further, Vieli \cite{V1} has generalized the case of circle in the higher
dimension and shows that a sphere whose radius does not lie in the zero sets of the
Bessel functions $J_{(n+2k-2)/2};~k\in\mathbb Z_+,$ the set of non-negative integers,
is a HUP corresponding to the unit sphere $S^{n-1}.$ In \cite{Sri5}, the  author
has shown that a cone is a HUP corresponding to the sphere as long
as the cone does not completely recline on the level surface of any homogeneous harmonic
polynomial on $\mathbb R^n.$

\smallskip

Further, Sj\"{o}lin \cite{S2} derived some Heisenberg uniqueness pairs corresponding to
the parabola. It has been extended to the case of paraboloid by Vieli \cite{V2}. Subsequently,
Babot \cite{Ba} has given a characterization of the Heisenberg uniqueness pairs corresponding
to a certain system of three parallel lines. Thereafter, the authors in \cite{GR} have given
some necessary and sufficient conditions for the Heisenberg uniqueness pairs corresponding to
a system of four parallel lines. However, the question of the unique necessary and sufficient
condition for the finitely many parallel lines as compared to three lines result \cite{Ba} is
still unsolved. For more examples of HUP corresponding to spiral, exponential
curves, sphere and paraboloid, see \cite{CGGS, GR}.

\smallskip

Jaming and Kellay \cite{JK} have given a unifying proof
for some of the Heisenberg uniqueness pairs corresponding to the hyperbola, polygon,
ellipse and graph of the functions $\varphi(t)=|t|^\alpha,$ whenever $\alpha>0.$
Thereafter, Gr\"{o}chenig and Jaming \cite{GJ} have worked out some of the Heisenberg
uniqueness pairs corresponding to quadratic surfaces. For more results on HUP via
ergodic theory dynamical systems, see \cite{CHM, HR2, HR3, GRR}.

\smallskip

In this article, we work out a few analogous results on the Heisenberg group
in various aspects. Firstly, we consider the symplectic Fourier transform (SFT)
on $\mathbb C^n.$ We prove that $S^{2n-1}$ forms a HUP with a non-harmonic
complex cone for SFT. The above result has a sharp contrast with analogous
result for the Euclidean Fourier transform (EFT) on $\mathbb R^{2n}.$
Since a non-trivial complex cone in $\mathbb C^n~(n\geq2)$ can have topological
dimension at most $2n-2,$ it follows that a $(2n-2)$ - dimensional entity forms
a HUP with $S^{2n-1}$ for SFT. Although for EFT on $\mathbb R^{2n},$ the least
topological dimension required (in general) for a set to be a HUP with unit sphere
$S^{2n-1}$ is $2n-1.$ We also observe that the conclusion of the above result
for the SFT holds true for a real non-harmonic cone in $\mathbb R^{2n}.$

\smallskip

Thereafter, we consider the case of modified Fourier transform on the Heisenberg
group. We prove that a finite measure supported on the cylinder $S^{2n-1}\times\mathbb R$
can be determined by a non-harmonic cone as well as the boundary of a bounded domain in
$\mathbb C^n.$

\smallskip

Further, we consider reasonably more interesting case of determining a finite
measure $\mu$ which is supported on $S^{2n-1},$ in terms of its spectral projections.
We prove that if the spectral projections $\varphi_k^{n-1}\times\mu$ vanish on
a sphere of arbitrary radius, then $\mu$ is trivial. We observed that the above
measures can also be determined by a non-harmonic complex cone. Though, the case
of the real non-harmonic cone is yet to be settled, we leave it open for the time
being.

\section{Preliminaries}\label{section2}
In this section, we describe some basic facts about Fourier transform on
the Heisenberg group, Weyl transform, special Hermite functions and expansion
of functions on $\mathbb C^n$ accordingly. Subsequently, we mention some auxiliary
results for the bigraded spherical harmonics and non-harmonic real as well as
complex cone.

The Heisenberg group $\mathbb H^n=\mathbb C^n\times\mathbb R$ is a step two nilpotent
Lie group having center $\mathbb R$ that equipped with the group law
\[(z,t)\cdot(w,s)=\left(z+w,t+s+\frac{1}{2}\text{Im}(z\cdot\bar w)\right).\]

\smallskip

By Stone-von Neumann theorem, the infinite dimensional irreducible unitary
representations of $\mathbb H^n$ can be parameterized by $\mathbb R^\ast=\mathbb R\smallsetminus\{0\}.$
That is, each of $\lambda\in\mathbb R^\ast$ defines a Schr\"{o}dinger representation $\pi_\lambda$ of
$\mathbb H^n$ by
\[\pi_\lambda(z,t)\varphi(\xi)=e^{i\lambda t}e^{i\lambda(x\cdot\xi+\frac{1}{2}x\cdot y)}\varphi(\xi+y),\]
where $z=x+iy$ and $\varphi\in L^2(\mathbb{R}^n).$ Hence, the group Fourier transform of
$f\in L^1(\mathbb H^n)$ can be defined  by
\[\hat f(\lambda)=\int_{\mathbb H^n}f(z,t)\pi_\lambda(z,t)dzdt\]
\smallskip
is a bounded operator. When $f\in L^2(\mathbb{H}^n), $ $\hat f(\lambda)$ is a Hilbert-Schmidt operator.
An important technique in
many problems on $\mathbb H^n$ is to take partial Fourier transform in the $t$-variable
to reduce the matter to $\mathbb C^n$. Let
\[f^\lambda(z)=\int_\mathbb R f(z,t)e^{i \lambda t} dt\]
be the inverse Fourier transform of $f$ in the $t$-variable. The group
convolution of the functions $f, g\in L^1(\mathbb H^n),$ defined by
\begin{equation} \label{exp22}
f\ast g(z, t)=\int_{H^n}~f((z,t)(-w,-s))g(w,s)~dwds,
\end{equation}
by taking the inverse Fourier transform in the $t$-variable, takes the form
\begin{eqnarray*}
(f \ast g)^\lambda(z)&=&\int_{-\infty}^{~\infty}~f \ast g(z,t)e^{i\lambda t} dt\\
&=&\int_{\mathbb C^n}~f^\lambda (z-w)g^\lambda(w)e^{\frac{i\lambda}{2}\text{Im}(z.\bar{w})}~dw\\
&=&f^\lambda\times_\lambda g^\lambda(z),
\end{eqnarray*}
where $f^\lambda\times_\lambda g^\lambda$ is known as $\lambda$-twisted convolution.
Thus, the group convolution  $f\ast g$ on the Heisenberg group can be studied by
the $\lambda$-twisted convolution $f^\lambda\times_\lambda g^\lambda$ on $\mathbb C^n.$
When $\lambda \neq 0,$ by a scaling argument, it is enough to study the twisted convolution
for the case $\lambda=1.$

\smallskip

Now, we recall the Weyl transform, which is the most non-commutative
constituent of the group Fourier transform on the Heisenberg group.
Denote by $\pi_\lambda(z)=\pi_\lambda(z,0).$
Then $\pi_\lambda(z,t)=e^{i\lambda t}\pi_\lambda(z).$ For a suitable function $g$ on
$\mathbb{C}^n$, the Weyl transform of $g$ can be expressed by
\[W_\lambda(g)=\int_{\mathbb C^n}g(w)\pi_\lambda(w)dw.\]
This, in turn, implies $\hat f(\lambda)=W_\lambda(f^\lambda).$  It is easy
followed that $W_\lambda(g)$ is a bounded operator, whenever $g\in L^1(\mathbb{C}^n).$
On the other hand, if $g\in L^2(\mathbb{C}^n),$
$W_\lambda(g)$ is a Hilbert-Schmidt opertor and satisfies the Plancherel formula
\[|\lambda|^{\frac{n}{2}}\|W_\lambda(g)\|_{HS}=(2\pi)^{\frac{n}{2}}\|g\|_2.\]
The Fourier-Winger transform of $\varphi,\psi \in L^2(\mathbb{R}^n)$ is defined by
the formula $V_{\varphi}^{\psi}(z)=(2\pi)^{-n/2}\langle \pi(z)\varphi,\psi \rangle.$
It is known that
$V_{\varphi}^{\psi}\in L^2(\mathbb{C}^n)$ and satisfies the identity
\begin{align}\label{exp28}
\int_{\mathbb{C}^n}V_{\varphi_1}^{\psi_1}(z)\overline{V_{\varphi_2}^{\psi_2}(z)}dz
=\langle \varphi_1,\varphi_2\rangle \overline{\langle \psi_1,\psi_2\rangle},
\end{align}
whenever $\varphi_l,\psi_l \in L^2(\mathbb{R}^n); \,l=1,2.$ See \cite{T1}.

\smallskip

Next, we describe the special Hermite expansion for function on $\mathbb C^n.$
Let \[T=\frac{\partial}{\partial t},~ X_j=\frac{\partial}{\partial
x_j}+\frac{1}{2}y_j\frac{\partial}{\partial t}~\text{and}~
Y_j=\frac{\partial}{\partial y_j}-\frac{1}{2}x_j\frac{\partial}{\partial t}\]
be the left-invariant vector fields on $\mathbb H^n.$ Then $\{T,X_j,Y_j: j=1,\ldots,n\}$
forms a basis for the Lie algebra $\mathfrak h^n$  of $\mathbb H^n,$ and the representation
$\pi_\lambda$ induces a representation $\pi_\lambda^*$ of $\mathfrak h^n$ on the space
of $C^\infty$ vectors in $L^2(\mathbb R^n)$ via
\[\pi_\lambda^*(X)f=\left.\frac{d}{dt}\right\vert_{t=0}\pi_\lambda(\exp tX)f.\]

\smallskip

It is easy to see that $\pi_\lambda^*(X_j)=i\lambda x_j$ and $\pi_\lambda^*(Y_j)=\frac{\partial}{\partial x_j}.$
Hence for the sub-Laplacian $\mathcal L=-\sum_{j=1}^n(X_j^2+Y_j^2),$
it follows that $\pi_\lambda^*(\mathcal L)=-\Delta_x+\lambda^2|x|^2=:H_\lambda,$
the scaled Hermite operators. Let
$\phi_\alpha^\lambda(x)=|\lambda|^{\frac{n}{4}}\phi_\alpha(\sqrt{|\lambda|}x);~\alpha\in\mathbb Z_+^n,$
where $\phi_\alpha$'s are the Hermite functions on $\mathbb R^n.$
Then each $ \phi_\alpha^\lambda$ is an eigenfunction of $H_\lambda$ with eigenvalue
$(2|\alpha|+n)|\lambda|.$ Hence the entry functions $E_{\alpha\beta}^\lambda$'s
of the representation $\pi_\lambda$ are eigenfunctions of the sub-Laplacian $\mathcal L$
satisfying \[\mathcal L E_{\alpha\beta}^\lambda=(2|\alpha|+n)|\lambda|E_{\alpha\beta}^\lambda,\]
where $E_{\alpha\beta}^\lambda(z,t)=\left\langle\pi_\lambda(z,t)\phi_\alpha^\lambda,\phi_\beta^\lambda\right\rangle.$
Since $E_{\alpha\beta}^\lambda(z,t)=e^{i\lambda t}\left\langle\pi_\lambda(z)\phi_\alpha^\lambda,\phi_\beta^\lambda\right\rangle,$
the eigenfunctions $E_{\alpha\beta}^\lambda$'s are not in $L^2(\mathbb H^n).$
However, for each fixed $t,$ they are in $L^2(\mathbb C^n).$ Now, define an operator
$L_\lambda$ by $\mathcal L\left(e^{i\lambda t}f(z)\right)=e^{i\lambda t}L_\lambda f(z).$
Then the special Hermite function
\[\phi_{\alpha\beta}^\lambda(z)=(2\pi)^{-\frac{n}{2}}\left\langle\pi_\lambda(z)\phi_\alpha^\lambda,\phi_\beta^\lambda\right\rangle\]
is an eigenfunction of $L_\lambda$ with eigenvalue $2|\alpha|+n.$ Now,
we can summarize that the special Hermite functions $\phi^\lambda_{\alpha\beta}$'s
form an orthonormal basis for $L^2(\mathbb C^n)$ \cite{T2}. Hence
$g\in L^2(\mathbb C^n)$ can be expressed as
\begin{equation}\label{exp31}
g=\sum_{\alpha,\beta}\left\langle g,\phi^\lambda_{\alpha\beta}\right\rangle\phi^\lambda_{\alpha\beta}.
\end{equation}
By employing a correlation of the special Hermite functions with the Laguerre function,
expression (\ref{exp31}) can be further simplified, for which we need to recall
the definition of Laguerre function. Given $\upsilon\in\mathbb{C},$ the Laguerre polynomial of
degree $k\in\mathbb{Z}_+$ is defined by
\begin{align*}
L_k^\upsilon(x)=\sum_{j=0}^k \binom{\upsilon+k}{k-j}\frac{(-x)^j}{j!}.
\end{align*}
Now, Laguerre function on $\mathbb{C}^n$ of order $n-1$ and degree $k$
can be defined by $\varphi_k^{n-1}(z)=L_k^{n-1}(\frac{|z|^2}{2}) e^{-\frac{|z|^2}{4}}.$ Denote
$\varphi^{n-1}_{k,\lambda}(z)=\varphi^{n-1}_k(\sqrt{|{\lambda|}}z),$ where $\lambda\in \mathbb{R}^*.$
Then the special Hermite functions $\phi^\lambda_{\alpha\alpha}$ will satisfy the relation
\begin{equation}\label{ACexp4}
\sum_{|\alpha|=k}\phi^\lambda_{\alpha,\alpha}(z)=(2\pi)^{-\frac{n}{2}}
|\lambda|^{\frac{n}{2}}\varphi^{n-1}_{k,\lambda}(z).
\end{equation}
Thus, $g\in L^2(\mathbb C^n)$ can be expressed by
\[g(z)=(2\pi)^{-n}|\lambda|^n\sum_{k=0}^\infty g\times_\lambda\varphi_{k,\lambda}^{n-1}(z).\]
Please refer to \cite{T2}. In particular, for $\lambda=1$, we have
\begin{equation}\label{ACexp16}
g(z)=(2\pi)^{-n}\sum_{k=0}^\infty g\times\varphi_k^{n-1}(z),
\end{equation}
which is the special Hermite expansion of $g$. Hence $g$ can be completely
determined by its spectral projections $g\times\varphi_k^{n-1}$. Thus, it is
an interesting question to determine finite measures $\mu$ those are
supported on a thin set in $\mathbb C^n$ via spectral projections
$\varphi_k^{n-1}\times\mu.$ We discuss this assertion in Section \ref{section5}.

\smallskip

Let $P_{p,q}$ denote the space of all bi-graded homogeneous polynomials on
$\mathbb C^n$ of the form
\begin{equation}\label{exp01}
P(z)=\sum_{|\alpha|=p}\sum_{|\beta|=q}c_{\alpha\beta}z^\alpha\bar{z}^\beta,
\end{equation}
where $p,q\in\mathbb Z_+.$ Denote $H_{p,q}=\{P\in P_{p,q}:\Delta P=0\}.$
The restriction of bi-graded homogeneous harmonic polynomial to the unit
sphere $S^{2n-1}$ is called bi-graded spherical harmonic.
\smallskip

The following weighted functional relations can be obtained by considering
the Hecke-Bochner identity for the spectral projection of compactly supported
functions. For more details, see \cite{T2}, p. 98.

\begin{lemma}\label{lemma3C}\cite{T2}
For $z\in\mathbb C^n,$ let $P\in H_{p,q}$ and $d\nu_r=Pd\sigma_r,$ where $\sigma_r$ is
the surface measure on the sphere $S_r.$ Then
\[\varphi_k^{n-1}\times\nu_r(z)=
(2\pi)^{-n}\frac{\Gamma(k-q+1)}{\Gamma(k+n+p)}r^{2(p+q)}\varphi_{k-q}^{n+p+q-1}(r)P(z)\varphi_{k-q}^{n+p+q-1}(z),\]
if $k\geq q$ and ~$0$ otherwise.
\end{lemma}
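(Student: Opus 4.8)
The plan is to derive the identity from the Hecke-Bochner formula for the special Hermite spectral projections, applied first to smooth compactly supported functions of bidegree type $(p,q)$ and then promoted to the surface measure $\nu_r$ by an annular approximation. Throughout I would exploit that, up to the normalizing constant appearing in the special Hermite expansion (\ref{ACexp16}), twisted convolution with $\varphi_k^{n-1}$ realizes the orthogonal projection $P_k$ onto the eigenspace $V_k$ of the twisted Laplacian $L_1$ with eigenvalue $2k+n$ (recall from (\ref{ACexp4}) that $\varphi_k^{n-1}$ is built from $\sum_{|\alpha|=k}\phi_{\alpha\alpha}^1$, each of eigenvalue $2k+n$), and that this projection commutes with the natural action of $U(n)$ on $L^2(\mathbb C^n)$.

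First I would record the structural statement: if $g(z)=P(z)f(\tfrac{|z|^2}{2})$ with $P\in H_{p,q}$ and $f$ a reasonable radial profile, then $\varphi_k^{n-1}\times g$ is a scalar multiple of $P(z)\varphi_{k-q}^{n+p+q-1}(z)$. This holds because $P(z)\varphi_{k-q}^{n+p+q-1}(z)$ is itself an eigenfunction of $L_1$ with eigenvalue $2k+n$ (the shift contributed by the harmonic factor of bidegree $(p,q)$ exactly compensating the drop in Laguerre degree from $k$ to $k-q$), and because the bigraded component of type $(p,q)$ inside $V_k$ is spanned by precisely such functions; the $U(n)$-equivariance of $P_k$ then pins down the harmonic factor $P$, leaving only a scalar to be computed. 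The same counting shows this component is empty when $q>k$, since the Laguerre degree $k-q$ would be negative, and this yields the vanishing asserted in the case $k<q$.

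Next I would approximate $d\sigma_r$ by normalized radial profiles $f_\varepsilon$ supported in the annulus $\{\,r\le|z|\le r+\varepsilon\,\}$, so that $P\,f_\varepsilon\to\nu_r$ weakly as measures; since $\varphi_k^{n-1}$ is smooth and the measures are compactly supported, the twisted convolutions in (\ref{exp22}) converge pointwise. Applying the structural step to $g_\varepsilon=P f_\varepsilon$ and passing to the limit gives $\varphi_k^{n-1}\times\nu_r=c_{k,p,q}(r)\,P(z)\varphi_{k-q}^{n+p+q-1}(z)$ for a scalar $c_{k,p,q}(r)$. To evaluate this scalar I would pair both sides against $\overline{P(z)\varphi_{k-q}^{n+p+q-1}(z)}$ and use the self-adjointness of $P_k$ together with the $L^2$-orthogonality of the Laguerre system; this collapses the surface integral on the left to a single radial evaluation, producing the factor $\varphi_{k-q}^{n+p+q-1}(r)$ at the radius $r$, the power $r^{2(p+q)}$ from the homogeneity of $P$ on $S_r$, and the Gamma-ratio $\Gamma(k-q+1)/\Gamma(k+n+p)$ from the normalizing constants of the Laguerre functions.

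The main obstacle I anticipate is the explicit bookkeeping in the last step: matching the normalization of $\varphi_{k-q}^{n+p+q-1}$, the $U(n)$-invariant pairing of two copies of $P$ over $S_r$, and the Gamma factors, all of which must combine into the stated constant. This is exactly the Hecke-Bochner computation on p.~98 of \cite{T2}, and the cleanest execution is to reduce everything to one standard Laguerre integral rather than manipulating the full special Hermite expansion term by term.
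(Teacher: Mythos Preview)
The paper does not supply its own proof of this lemma: it is quoted directly from \cite{T2}, with only the remark that the identity ``can be obtained by considering the Hecke-Bochner identity for the spectral projection of compactly supported functions'' (see the sentence preceding the lemma). Your proposal follows precisely that route---Hecke-Bochner for functions of the form $P(z)f(|z|^2/2)$, then passage to the surface measure by annular approximation and computation of the scalar via the Laguerre normalizations---so it is fully in line with what the paper indicates and with the source it cites.
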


We need the following basic facts about the bigraded
spherical harmonics, (see \cite{D, Gr1, T2} for details). Let $K=U(n)$
be the unitary group and $M=U(n-1).$ Then, $S^{2n-1}\cong K/M$ under
the map $kM\rightarrow k.e_n,$ where $k\in U(n)$ and $e_n=(0,\ldots
,1)\in \mathbb C^n.$ Let $\hat{K}_M$ denote the set of all
equivalence classes of irreducible unitary representations of $K$
which have a nonzero $M$-fixed vector.

\smallskip

For a $\tau\in\hat{K}_M,$ which is realized on $V_{\tau},$ let
$\{e_1,\ldots, e_{d(\tau)}\}$ be an orthonormal basis of
$V_{\tau}$ with $e_1$ as the $M$-fixed vector. Let
$t_{ij}^{\tau}(k)=\langle e_i,\tau (k)e_j \rangle ,$ $k\in K.$
By the Peter-Weyl theorem, $\{\sqrt{d(\tau
)}t_{j1}^{\tau}:1\leq j\leq d(\tau ),\tau\in\hat{K}_M\}$ forms an
orthonormal basis for $L^2(K/M),$ (see \cite{T2}).
Define $Y_j^{\tau} (\omega )=\sqrt{d(\tau )}t_{j1}^{\tau}(k),$
where $\omega =k.e_n\in S^{2n-1},$ and $k \in K.$ Then
$\{Y_j^{\tau}:1\leq j\leq d(\tau ),\tau\in \hat{K}_M\}$
becomes an orthonormal basis for $L^2(S^{2n-1}).$
\smallskip

Since $H_{p,q}$ is $K$-invariant, let $\pi_{p,q}$ be the
corresponding representation of $K$ on $H_{p,q}.$ Then $\hat{K}_M$
can be identified with $\{\pi_{p,q}: p,q\in\mathbb Z_+\}.$
See \cite{Ru}, p.253, for more details. Thus, a bi-graded spherical
harmonic on $S^{2n-1}$ can be defined by $Y_j^{p,q}(\omega)=\sqrt{d(p,q )}t_{j1}^{p,q}(k),$
and hence $\{Y_j^{p,q}:1\leq j\leq d(p,q),p,q \in \mathbb Z_+ \}$ forms
an orthonormal basis for $L^2(S^{2n-1}).$ For $f\in L^2(S^{2n-1}),$ the
expression
\begin{equation}\label{exp32}
\Pi_{p,q}(f)(\omega)=\sum_{j=1}^{d(p,q)}a_j^{p,q}Y_j^{p,q}(\omega),
\end{equation}
where $a_j^{p,q}\in\mathbb C,$ is called $(p,q)^{th}$ spherical
harmonic projection of $f.$

\smallskip

For each $l\in\mathbb Z_+,$ the space $H_l$ consists of spherical harmonics
of degree $l$ in $\mathbb{R}^d,$ is $SO(d)$ - invariant. When $d=2n,$ $H_l$
will be $U(n)$ - invariant as well, and under this action of $U(n),$ the
space $H_l$ breaks up into an orthogonal direct sum of $H_{p,q}$'s, where $p+q=l.$

\begin{lemma}\label{lemma4}\cite{Ru}.
Let $\omega\in S^{2n-1}$ and $Y\in H_l.$ Then there exists $Y_{p,q}\in H_{p,q},~p+q=l$ such that
\begin{equation}\label{exp33}
Y(\omega)=\sum_{p+q=l}Y_{p,q}(\omega).
\end{equation}
\end{lemma}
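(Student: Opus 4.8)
The plan is to lift the statement from the sphere to homogeneous polynomials and then exploit the complex structure of $\mathbb R^{2n}\cong\mathbb C^n$. Each $Y\in H_l$ is the restriction to $S^{2n-1}$ of a unique harmonic homogeneous polynomial of degree $l$, namely $|X|^l Y(X/|X|)$, and this restriction map is a bijection between degree-$l$ harmonic homogeneous polynomials and degree-$l$ spherical harmonics. It therefore suffices to prove the polynomial identity
\begin{equation*}
H_l=\bigoplus_{p+q=l}H_{p,q},
\end{equation*}
where here $H_l$ is read as the space of degree-$l$ harmonic homogeneous polynomials in the real variables $x_1,\dots,x_n,y_1,\dots,y_n$ and $H_{p,q}$ is as in $(\ref{exp01})$. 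Substituting $x_j=(z_j+\bar z_j)/2$ and $y_j=(z_j-\bar z_j)/2i$, any complex-valued homogeneous polynomial of degree $l$ becomes a polynomial in $z,\bar z$ that is homogeneous of total degree $l$; collecting monomials according to their holomorphic degree $p$ and antiholomorphic degree $q$ gives the purely algebraic splitting $P_l=\bigoplus_{p+q=l}P_{p,q}$, where $P_l$ denotes the space of all degree-$l$ homogeneous polynomials.

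The key point is that harmonicity is compatible with this bigrading. In complex coordinates the Laplacian reads $\Delta=4\sum_{j=1}^n\partial^2/(\partial z_j\,\partial\bar z_j)$, since $\partial/\partial z_j=\tfrac12(\partial/\partial x_j-i\,\partial/\partial y_j)$ and $\partial/\partial\bar z_j=\tfrac12(\partial/\partial x_j+i\,\partial/\partial y_j)$ give $4\,\partial^2/(\partial z_j\partial\bar z_j)=\partial^2/\partial x_j^2+\partial^2/\partial y_j^2$. As $\partial/\partial z_j$ lowers the holomorphic degree by one and fixes the antiholomorphic degree, while $\partial/\partial\bar z_j$ does the reverse, the operator $\Delta$ sends $P_{p,q}$ into $P_{p-1,q-1}$. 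In particular, for distinct pairs $(p,q)$ with $p+q=l$, the image spaces $\Delta(P_{p,q})\subseteq P_{p-1,q-1}$ sit inside distinct summands of $P_{l-2}$.

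Now write any $P\in P_l$ as $P=\sum_{p+q=l}Q_{p,q}$ with $Q_{p,q}\in P_{p,q}$. Then $\Delta P=\sum_{p+q=l}\Delta Q_{p,q}$, and since $P_{l-2}=\bigoplus_{p'+q'=l-2}P_{p',q'}$ is a direct sum and the terms $\Delta Q_{p,q}$ lie in pairwise distinct summands, the vanishing of $\Delta P$ forces $\Delta Q_{p,q}=0$ separately for every $p+q=l$. Thus $P$ is harmonic precisely when each component $Q_{p,q}$ is harmonic, which establishes $H_l=\bigoplus_{p+q=l}H_{p,q}$. Restricting to $S^{2n-1}$ and putting $Y_{p,q}=Q_{p,q}|_{S^{2n-1}}\in H_{p,q}$ yields the decomposition $(\ref{exp33})$.

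The one step deserving genuine attention is the decoupling in the last paragraph: it rests entirely on the fact that $\Delta$ shifts bidegree by exactly $(-1,-1)$, so harmonicity cannot couple components of different bidegree; everything else is routine bookkeeping of the $z,\bar z$ grading. As an alternative one could invoke the branching of the irreducible $SO(2n)$-module $H_l$ under $U(n)$, whose irreducible constituents are precisely the $H_{p,q}$ with $p+q=l$; however, the self-contained Laplacian computation above avoids citing branching rules and is, I expect, the cleaner route.
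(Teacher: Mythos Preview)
Your argument is correct. The identification of $H_l$ (as harmonic homogeneous polynomials) with spherical harmonics via restriction, the bigrading $P_l=\bigoplus_{p+q=l}P_{p,q}$ under the change of variables to $z,\bar z$, and the crucial observation that $\Delta=4\sum_j\partial_{z_j}\partial_{\bar z_j}$ shifts bidegree by $(-1,-1)$ so that harmonicity decouples across bidegrees, are all valid and cleanly assembled.

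As for comparison: the paper does not actually prove this lemma. It is stated with a citation to Rudin, and the surrounding text only records the representation-theoretic fact that $H_l$ is $U(n)$-invariant and decomposes orthogonally into the $H_{p,q}$ with $p+q=l$. That is precisely the branching argument you mention at the end as an alternative. So your write-up supplies a self-contained proof where the paper simply quotes the literature; your Laplacian computation is more elementary than the $U(n)$-module route, while the latter additionally yields orthogonality and irreducibility of the pieces (facts the paper uses elsewhere, e.g.\ in the proof of Theorem~\ref{th2}).
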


\begin{definition}\label{def1}
A set $\mathcal C\subset\mathbb C^n~(n\geq 2)$ that satisfies the scaling condition
$\lambda \mathcal C\subseteq\mathcal C$ for all $\lambda\in\mathbb C,$ is called a complex cone,
whereas a set $\mathcal D$ in $\mathbb R^d~(d\geq2)$ which satisfies $\lambda\mathcal D\subseteq\mathcal D,$
for all $\lambda\in\mathbb R$ is called a real cone.
\end{definition}

We say a cone is {\em non-harmonic} if it is not contained in the zero set
of any homogeneous harmonic polynomial. An example of a non-harmonic complex
cone was produced by the  author (see \cite{Sri4}). The zero set of the polynomial
$H(z)=az_1\bar z_2+|z|^2,$ where $a\not=0$ and $z\in\mathbb C^n,$ is a complex cone
which is not contained in the zero set of any bi-graded homogeneous harmonic polynomial.

\smallskip

An example of a non-harmonic real cone was given by Armitage, (see \cite{A}). Let $0<a<1.$
Then $K_a=\left\{x\in\mathbb R^d:~|x_1|^2=a^2|x|^2\right\}$ is a non-harmonic cone
if and only if $D^m G_k^{\frac{d-2}{2}}(a)\neq0,$ for all $0\leq m\leq k-2,$ where $D^m$
stands for the $m$th derivative on $\mathbb  R.$

\smallskip

In view of Lemma \ref{lemma4}, it is easy to prove the following result,
which is required to prove our main result.

\begin{lemma}\label{lemma5}
Let $Y\in H_l$ be given as in (\ref{exp33}). Suppose $\mathcal C$ be a complex cone, and
denote $\widetilde{\mathcal C}=\left\{\frac{z}{|z|}:~z\in\mathcal C, ~ z\neq0\right\}.$
Then $Y=0$ on $\widetilde{\mathcal C}$ if and only if  $Y_{p,q}=0$ on $\widetilde{\mathcal C},~\forall~
p, q\in\mathbb Z_+$ which are lying on the diagonal $p+q=l.$
\end{lemma}

\begin{proof}
Let $\omega\in\widetilde{\mathcal C}$ and $Y(\omega)=0.$  Since the cone $\mathcal C$ is closed
under complex scaling, by replacing $\omega$ with $e^{i\theta}\omega$ in (\ref{exp33}) we get
\[\sum_{p+q=l}e^{i(p-q)\theta}Y_{p,q}(\omega)=0.\]
Thus, the proof of the required lemma will be followed by the fact that $\{e^{is\theta}: ~s\in\mathbb Z\}$
is an orthogonal set in $L^2(S^1).$
\end{proof}

For each fixed $\xi\in S^{2n-1},$ define a linear functional on $H_l$
by $Y\mapsto Y(\xi).$ Then there exists a unique spherical harmonic,
say $Z_\xi^{(l)}\in H_l$ such that
\begin{equation}\label{exp1}
Y(\xi)=\int_{S^{d-1}}Z_\xi^{(l)}(\eta)Y(\eta)d\sigma(\eta).
\end{equation}
The spherical harmonic $Z_\xi^{(l)}$ is a $SO(2n)$ bi-invariant real-valued function,
which is constant on each geodesic, orthogonal to the line joining the origin and
$\xi.$ The spherical harmonic $Z_\xi^{(l)}$ is called the zonal harmonic of the
space $H_l$ at the pole $\xi.$ For more details, see \cite{SW}, p. 143.

\smallskip

For $1\leq t\leq\infty,$ let $f\in L^t(S^{2n-1}).$ For each $l\in\mathbb Z_+,$
we define the $l^{th}$ spherical harmonic projection of $f$ by
\begin{equation}\label{exp11}
\Pi_lf(\xi)=\int_{S^{2n-1}}Z_\xi^{(l)}(\eta)f(\eta)d\sigma(\eta).
\end{equation}
Then  $\Pi_lf$ is a spherical harmonic of degree $l.$ If for a $\delta>n-1,$
we denote $A_l^m(\delta)=\binom{m-l+\delta}{\delta}{\binom{m+\delta}{\delta}}^{-1},$
then the spherical harmonic expansion $\sum\limits_{l=0}^\infty\Pi_lf$ will be $\delta$-Cesaro
summable to $f.$ That is,
\begin{equation}\label{exp2}
f=\lim\limits_{m\rightarrow\infty}\sum_{l=0}^m~A_l^m(\delta)\Pi_lf,
\end{equation}
where limit in the right-hand side of (\ref{exp2}) exists in $L^t\left(S^{2n-1}\right).$
Further, the convergence in (\ref{exp2}) can be extended to hold in $L^t\left(rS^{2n-1}\right)$ while $r>0.$
For more details, see \cite{So}.

\smallskip

We would like to mention that the proof of our main result will be carried out
by concentrating the cone to the unit sphere and decomposing the integral
on sphere into averages over geodesic spheres. This is possible because the cone
is closed under scaling.

\smallskip
For $\omega\in S^{2n-1}$ and $t\in(-1, 1),$ the set $S_\omega^t=\left\{\nu\in S^{2n-1}: \omega\cdot\nu=t\right\}$
is a geodesic sphere on $S^{2n-1}$ with a pole at $\omega.$ Let $f$ be an integrable function
on $S^{2n-1}.$ Then in view of Fubini's Theorem, we can define the geodesic spherical means
of $f$ by
\[\tilde f(\omega, t)=\int_{S_\omega^t}f d\sigma_{2n-2},\]
where $\sigma_{2n-2}$ stands for the normalized surface measure on the geodesic sphere $S_\omega^t.$

\smallskip

Since the zonal harmonic $Z_\xi^{(l)}$ is $SO(2n)$ bi-invariant, three exists
a nice function $F$ on $(-1,1)$ satisfying $Z_\xi^{(l)}(\eta)=F(\xi\cdot\eta).$
Hence the extension of the formula (\ref{exp1}) for the functions $F\in L^1(-1,1)$
is inevitable. The latter fact is known as Funk-Hecke formula. In other words,
\begin{equation}\label{exp03}
\int_{S^{2n-1}}F(\xi\cdot\eta)Y(\eta)d\sigma(\eta)= C_lY(\xi),
\end{equation}
where the constant $C_l$ is given by
\[C_l=\alpha_l\int_{-1}^1F(t)G_l^{n-1}(t)(1-t^2)^{\frac{2n-3}{2}}~dt,\]
and $G_l^\beta$ stands for the Gegenbauer polynomial of degree $l$ and order $\beta.$
For more details, see
\cite{AAR}, p. 459. We shall mention the following lemma which percolates the geodesic
mean vanishing conditions of $f\in L^1(S^{2n-1})$ to vanishing condition of
each spherical harmonic projection of $f$. For the class of continuous functions
$C(S^{2n-1}),$ this lemma was proved in \cite{AVZ}. In \cite{Sri5}, the  author
has extended the result for functions in $L^1(S^{2n-1})$ with the help of the
Cesaro summation formula (\ref{exp2}).

\begin{lemma}\label{lemma3}\cite{Sri5}
Let  $f\in L^1(S^{2n-1}).$ Then $\tilde f(\omega, t)=0~$ for all $t\in (-1, 1)$
if and only if $\Pi_lf(\omega)=0$ for all $l\in\mathbb Z_+.$
\end{lemma}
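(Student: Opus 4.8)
The statement is an equivalence in a fixed pole $\omega$, and the plan is to treat the two implications separately, the forward one being essentially a bookkeeping consequence of disintegrating the surface measure and the converse carrying the real content. For the forward implication I would disintegrate $d\sigma$ along the geodesic spheres $S_\omega^t$. Since the zonal harmonic $Z_\omega^{(l)}$ is $SO(2n)$ bi-invariant it is constant on each $S_\omega^t$; writing $Z_\omega^{(l)}(\eta)=F_l(\omega\cdot\eta)$ and using $d\sigma=c_n(1-t^2)^{\frac{2n-3}{2}}\,d\sigma_{2n-2}\,dt$, the defining integral (\ref{exp11}) collapses to a one-dimensional integral
\[
\Pi_l f(\omega)=c_n\int_{-1}^1 F_l(t)\,\tilde f(\omega,t)\,(1-t^2)^{\frac{2n-3}{2}}\,dt .
\]
Hence if $\tilde f(\omega,t)=0$ for every $t\in(-1,1)$, then $\Pi_l f(\omega)=0$ for all $l$, with no further work.

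For the converse, the decisive preliminary is a mean-value identity for a spherical harmonic $Y\in H_l$, namely
\[
\tilde Y(\omega,t)=\kappa_l\,G_l^{n-1}(t)\,Y(\omega),
\]
with $\kappa_l$ a constant. I would read this off directly from the Funk--Hecke formula (\ref{exp03}): inserting an arbitrary $F$ there and comparing, through the same disintegration as above, with $\int_{-1}^1 F(t)\,\tilde Y(\omega,t)\,(1-t^2)^{\frac{2n-3}{2}}\,dt$ forces the two integrands to agree for every admissible $F$, which is exactly the displayed identity. Applying it to $Y=\Pi_l f\in H_l$ (see (\ref{exp1})) gives $\widetilde{\Pi_l f}(\omega,t)=\kappa_l\,G_l^{n-1}(t)\,\Pi_l f(\omega)$, so the hypothesis $\Pi_l f(\omega)=0$ for all $l$ makes each geodesic mean $\widetilde{\Pi_l f}(\omega,\cdot)$ vanish identically.

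To reassemble $f$ from its projections I would invoke the Cesàro summability (\ref{exp2}). The geodesic-mean operator $g\mapsto\tilde g(\omega,\cdot)$ is an averaging, hence an $L^1$-contraction: by Jensen's inequality and Fubini, $\int_{-1}^1|\tilde g(\omega,t)|(1-t^2)^{\frac{2n-3}{2}}\,dt\le C\,\|g\|_{L^1(S^{2n-1})}$. Writing $f$ as the $L^1$-limit of its Cesàro means $s_m=\sum_{l=0}^m A_l^m(\delta)\,\Pi_l f$, the contraction lets me commute the geodesic mean with the limit, so $\tilde s_m(\omega,\cdot)\to\tilde f(\omega,\cdot)$ in $L^1(-1,1)$. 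But by linearity and the previous paragraph $\tilde s_m(\omega,t)=\sum_{l=0}^m A_l^m(\delta)\,\widetilde{\Pi_l f}(\omega,t)=0$ for every $m$, whence $\tilde f(\omega,\cdot)=0$.

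The main obstacle is precisely this last transfer. For merely $L^1$ data the Cesàro means converge only in norm, so the conclusion is genuinely an almost-everywhere statement, and commuting the geodesic mean past the limit rests entirely on the $L^1$-contractivity recorded above; for continuous $f$, the case treated in \cite{AVZ}, uniform convergence would instead yield the vanishing pointwise. A clean alternative to the Cesàro step would be a moment argument: the formula of the first paragraph shows that every Gegenbauer coefficient of $\tilde f(\omega,\cdot)$ against the weight $(1-t^2)^{\frac{2n-3}{2}}$ vanishes, and since polynomials are dense in $C[-1,1]$ the finite measure $\tilde f(\omega,t)(1-t^2)^{\frac{2n-3}{2}}\,dt$ must be zero, giving the same conclusion.
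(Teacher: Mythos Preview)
The paper does not supply its own proof of this lemma; it quotes the result from \cite{Sri5} and records only that the continuous case is in \cite{AVZ} and that the $L^1$ extension is obtained ``with the help of the Cesaro summation formula (\ref{exp2}).'' Your main argument follows exactly this outline: disintegration along the geodesic spheres $S_\omega^t$ for the forward implication, and for the converse the mean-value identity $\widetilde{\Pi_l f}(\omega,t)=\kappa_l\,G_l^{n-1}(t)\,\Pi_l f(\omega)$ (read off from Funk--Hecke) combined with $L^1$-Ces\`aro summability and the $L^1$-contractivity of $g\mapsto\tilde g(\omega,\cdot)$ to pass to the limit.

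Your closing ``moment'' alternative is worth highlighting, since it is shorter than the Ces\`aro route and bypasses the approximation step altogether. The display in your first paragraph, together with the fact that $F_l$ is a constant multiple of the Gegenbauer polynomial $G_l^{n-1}$, says that the hypothesis $\Pi_l f(\omega)=0$ for all $l$ annihilates every Gegenbauer coefficient of the finite measure $\tilde f(\omega,t)(1-t^2)^{(2n-3)/2}\,dt$ on $[-1,1]$; density of polynomials then kills this measure, hence $\tilde f(\omega,\cdot)=0$ a.e. Both approaches give the conclusion only almost everywhere in $t$, which is the correct reading of the lemma for $L^1$ data.
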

Notice that, as a corollary to Lemma \ref{lemma3}, it can be shown that if
$\tilde f(\omega, t)=0$ for all $(\omega,t)\in\widetilde{\mathcal C}\times{(-1,1)},$
then $f=0$ on $S^{2n-1}$ as long as $\mathcal C$ is happened to be non-harmonic.

\section{Uniqueness pairs for the symplectic Fourier transform}\label{section3}
In this section, we prove that the unit sphere $S^{2n-1}$ together with a non-harmonic
complex cone forms a Heisenberg uniqueness pair for the symplectic Fourier transform (SFT).

\smallskip

Let $X(S^{2n-1})$ denote the space of all finite Borel measures $\mu$ in $\mathbb C^n$
which are supported on $S^{2n-1}$ and absolutely continuous with respect to the
surface measure on $S^{2n-1}.$ By Radon-Nikodym theorem, there exists
$f\in L^1(S^{2n-1})$ such that $d\mu=fd\sigma.$ Define the symplectic Fourier
transform of a measure $\mu\in X(S^{2n-1})$ by
\[\mathcal F_S\mu(z)=\int_{S^{2n-1}}e^{-\frac{i}{2}\text{Im}(z\cdot\bar\zeta)}f(\zeta)d\sigma(\zeta),\]
where $z=x+iy\in\mathbb C^n$ and $\zeta=\xi+i\eta\in\mathbb C^n$. Hence
$\mathcal F_S\mu$ is a bounded uniformly continuous function on $\mathbb C^n.$
In other words, $\mathcal F_S\mu$ can be expressed as
\begin{equation}\label{exp9}
\mathcal F_S\mu(x,y)=\int_{S^{2n-1}}e^{-\frac{i}{2}(-x\cdot\eta+y\cdot\xi)}f(\xi,\eta)d\sigma(\xi,\eta).
\end{equation}
We are intended to prove the following result.
\begin{theorem}\label{th1}
Let $\mathcal C$ be a complex cone in $\mathbb C^n$  and $\mu\in X(S^{2n-1}).$ If
$\mathcal{F}_S\mu(z)=0$ for all $z\in\mathcal C,$ then $\mu=0$ if and only if
$\mathcal C$ is non-harmonic.
\end{theorem}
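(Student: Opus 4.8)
The plan is to reduce the vanishing of $\mathcal{F}_S\mu$ on the cone to the vanishing of the geodesic spherical means of $f$ on $\widetilde{\mathcal C}$, and then feed this into Lemmas \ref{lemma3} and \ref{lemma5} together with the non-harmonicity hypothesis. First I would exploit the scaling structure of the cone. Writing $z=r\omega$ with $\omega\in\widetilde{\mathcal C}$ and $r>0$, the homogeneity of the phase gives $\text{Im}(z\cdot\bar\zeta)=r\,\text{Im}(\omega\cdot\bar\zeta)$, and the real bilinear form $\text{Im}(\omega\cdot\bar\zeta)$ equals $(J\omega)\cdot\zeta$, where $J$ is the orthogonal complex structure on $\mathbb R^{2n}\cong\mathbb C^n$ (multiplication by $-i$). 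Thus $J\in SO(2n)$, and since the complex cone satisfies $-i\mathcal C=\mathcal C$ we have $J\widetilde{\mathcal C}=\widetilde{\mathcal C}$. Consequently (\ref{exp9}) becomes a genuine Euclidean plane-wave integral over $S^{2n-1}$, and decomposing it over the geodesic spheres $S_{J\omega}^t$ (via Fubini, with the Funk--Hecke weight $(1-t^2)^{\frac{2n-3}{2}}$) yields
\[\mathcal{F}_S\mu(r\omega)=c_n\int_{-1}^1 e^{-\frac{ir}{2}t}\,\tilde f(J\omega,t)\,(1-t^2)^{\frac{2n-3}{2}}\,dt.\]

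Since $\mathcal C$ is closed under positive scaling, the hypothesis gives $\mathcal{F}_S\mu(r\omega)=0$ for every $r>0$ and every $\omega\in\widetilde{\mathcal C}$. For fixed $\omega$, the right-hand side is, after the substitution $s=r/2$, the Fourier transform of the compactly supported $L^1$ function $t\mapsto\tilde f(J\omega,t)(1-t^2)^{\frac{2n-3}{2}}$. This transform extends to an entire function of $s$, so its vanishing on the half-line $s>0$ forces it to vanish identically; as the weight is strictly positive on $(-1,1)$, we obtain $\tilde f(J\omega,t)=0$ for a.e.\ $t\in(-1,1)$. Letting $\omega$ run over $\widetilde{\mathcal C}$ and using $J\widetilde{\mathcal C}=\widetilde{\mathcal C}$, I conclude that $\tilde f(\omega',t)=0$ for all $\omega'\in\widetilde{\mathcal C}$ and all $t\in(-1,1)$.

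For the ``if'' direction it then suffices to invoke the corollary of Lemma \ref{lemma3}: Lemma \ref{lemma3} upgrades the vanishing of the geodesic means to $\Pi_l f=0$ on $\widetilde{\mathcal C}$ for every $l$, Lemma \ref{lemma5} refines this to $Y_{p,q}=0$ on $\widetilde{\mathcal C}$ for all $p+q=l$, and the non-harmonicity of $\mathcal C$ forbids any nonzero bigraded harmonic in $H_{p,q}$ from vanishing on $\widetilde{\mathcal C}$; hence every $Y_{p,q}$, and therefore $f$, is zero, so $\mu=0$. For the ``only if'' direction I would argue by contraposition: if $\mathcal C$ is harmonic, choose a nonzero $P\in H_{p,q}$ vanishing on $\mathcal C$, set $d\mu=(P|_{S^{2n-1}})\,d\sigma\neq 0$, and apply the Hecke--Bochner identity (in the spirit of Lemma \ref{lemma3C}) to get
\[\mathcal{F}_S\mu(r\omega)=c\,i^{-(p+q)}\,(r/2)^{-(n-1)}J_{p+q+n-1}(r/2)\,P(J\omega),\]
which vanishes on $\mathcal C$ because $P(J\omega)=0$ there; thus $(S^{2n-1},\mathcal C)$ fails to be a uniqueness pair.

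The main obstacle I anticipate is the first step: recasting the symplectic phase as a true Euclidean plane wave through the complex structure $J$ and verifying $J\widetilde{\mathcal C}=\widetilde{\mathcal C}$. This is precisely where the complex (rather than merely real) scaling invariance of the cone is essential, since it both licenses the reduction above and makes the bigraded refinement of Lemma \ref{lemma5} applicable. By comparison, the analyticity argument killing the one-dimensional Fourier transform, and the subsequent passage through Lemmas \ref{lemma3} and \ref{lemma5}, are comparatively routine.
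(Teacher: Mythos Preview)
Your proposal is correct and follows essentially the same route as the paper's proof: your complex structure $J$ is exactly the paper's map $\omega\mapsto\tilde\omega$ (the symplectic matrix $\sigma_o$), your observation $J\widetilde{\mathcal C}=\widetilde{\mathcal C}$ is the paper's ``$r\omega\in\mathcal C\Rightarrow r\tilde\omega\in\mathcal C$'', and the subsequent geodesic-sphere decomposition, one-variable Fourier uniqueness, and appeal to Lemmas~\ref{lemma3} and~\ref{lemma5} coincide with the paper step for step. Your converse via Hecke--Bochner likewise matches the paper's use of the Funk--Hecke identity~(\ref{exp17}); you are merely a bit more explicit about the weight $(1-t^2)^{(2n-3)/2}$ and the analytic continuation from the half-line, which the paper leaves implicit.
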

\begin{proof}
Let $(x,y)=r\omega,$ where $\omega=(\omega_1,\ldots,\omega_n,\omega'_1,\ldots,\omega'_n)\in S^{2n-1}.$
Denote $\tilde \omega=(\omega'_1,\ldots,\omega'_n,-\omega_1,\ldots,-\omega_n).$ Then from (\ref{exp9})
it implies that
\begin{equation}\label{exp3}
\int_{S^{2n-1}}e^{-\frac{i}{2}r\tilde\omega\cdot (\xi,\eta)}f(\xi,\eta)d\sigma(\xi,\eta)=0,
\end{equation}
whenever $r\omega\in\mathcal C.$ Since $\mathcal C$ is closed under complex scaling, $r\omega\in\mathcal C$
implies $r\tilde \omega\in\mathcal C.$ By decomposing the integral in (\ref{exp3}) over geodesic spheres at
pole $\omega,$ we obtain
\[\int_{-1}^1\left(\int_{S_\omega^t} e^{-\frac{i}{2}{r\omega\cdot\nu}}f(\nu)d\sigma_{2n-2}(\nu)\right)dt=0,\]
where $S_\omega^t=\left\{\nu\in S^{2n-1}: \omega\cdot\nu=t\right\}.$  That is,
\begin{equation}\label{exp7}
\int_{-1}^1e^{-\frac{i}{2}rt}\tilde f(\omega,t)dt=0,
\end{equation}
for all $r>0.$ Hence $\tilde f(\omega,t)=0,$ for all $t\in(-1,1).$ Thus, by
Lemma \ref{lemma3}, it follows that $\Pi_l(f)(\omega)=0$ for all $l\in\mathbb Z_+.$
Further, by Lemma \ref{lemma5}, we get $\Pi_{p,q}f(\omega)=0$ for all $p,q\in\mathbb Z_+.$
Thus, by the given condition that $w\notin Y_{pq}^{-1}(0)$ for any $p,q\in\mathbb Z_+,$
it follows that $f=0.$ That is, $\mu=0.$

Conversely, assume that $\mathcal C$ is contained in the zero set of a bigraded
homogeneous harmonic polynomial $P\in H_{p,q}$ and denote $Y=P|_{S^{2n-1}}.$
For $\zeta\in S^{2n-1},$ let $d\mu(\zeta)=Y(\zeta)d\sigma(\zeta).$ Then $\mu$
is a finite complex Borel measure supported on $S^{2n-1}.$ By identifying $\mathbb C^n$
with $\mathbb R^{2n},$ we find that $H_{p,q}\subseteq H_{p+q}.$ Therefore, for
 $z=r\omega\in\mathcal C,$ we can summarize that
\begin{align}\label{exp17}
\mathcal F_S\mu(z)=\int_{S^{2n-1}} e^{-ir\tilde \omega\cdot \zeta} Y_{p+q}(\zeta) d\sigma(\zeta)
=(2\pi)^n i^{p+q}\dfrac{J_{p+q+n-1}(r)}{r^{n-1}} Y_{p+q}(\tilde \omega),
\end{align}
where the last identity was obtain in \cite{AAR}, as a corollary of the Funk-Hecke formula (\ref{exp03}).
Recall that $r\omega\in \mathcal{C}$ implies $r\tilde \omega\in\mathcal C.$ Thus, from (\ref{exp17}),
we can conclude that $\mathcal F_S \mu|_\mathcal C=0.$
\end{proof}

\begin{remark}\label{rk1} $(a)$ Further, we observed that Theorem \ref{th1}  holds for
a {\em non-harmonic real cone}. Let $\mathcal C$ be a non-harmonic real cone. Write
$\tilde\omega=\sigma_o\omega,$ where $\sigma_o$ is the symplectic matrix, which in fact,
belongs to $U(n)\subset SO(2n).$ Suppose $\mu\in X(S^{2n-1})$ satisfies $\mathcal F_S\mu\vert_\mathcal C=0.$
Then $\Pi_lf(\sigma_o\omega)=0$ for all $l\in\mathbb Z_+.$ Since $\sigma_o^{-1}\cdot\Pi_lf$
would also be a spherical harmonic, we infer that $(S^{2n-1},\mathcal C)$ is a HUP for SFT.

\smallskip

$(b)$ Suppose $\Gamma$ be a smooth sub-manifold in $\mathbb{R}^{2n}$ and $\Lambda$
be a subset of $\mathbb{R}^{2n}.$ Let $T:\mathbb{R}^{2n}\rightarrow\mathbb{R}^{2n}$
be defined by $T(x,y)=(\frac{y}{2},-\frac{x}{2})$ for $x,y\in\mathbb{R}^n.$
It is easy to see that $\mathcal{F}_S\mu(x,y)=\hat{\mu}(\frac{y}{2},-\frac{x}{2}),$
where $\hat{\mu}$ is the Euclidean Fourier transform (EFT) of $\mu.$ Thus, $(\Gamma,\Lambda)$
is a HUP for SFT if and only if $(\Gamma,T\Lambda)$ is a HUP
for the EFT.
\smallskip

For instance, by the Euclidean result (\cite{V1}, Proposition $1.2$),
$(S^{2n-1},S_r^{2n-1})$ is a HUP for the SFT as long as $\frac{r}{2}\notin J_{(n+k-1)}^{-1}(0)$
for any $k\in\mathbb Z_+.$

$(c)$ Consider the map $\tilde T:=(T^{-1})^*$ on $\mathbb{R}^{2n}.$ It is known for the
EFT that $(\Gamma,\Lambda)$ is a HUP if and only if $(\tilde T^{-1}\Gamma,\tilde T^*\Lambda)$
is a HUP. Hence, from Remark $(b),$ we have $(\Gamma,\Lambda)$ is a HUP for SFT if and only if
$(T^*\Gamma,\Lambda)$ is a HUP for EFT. Thus, in view of the Euclidean result (\cite{Sri5}, Theorem $3.1$),
Remark $(a)$ holds true.
\end{remark}

\section{Uniqueness pairs for the modified Fourier transform on $\mathbb H^n$}\label{section4}
In this section, we prove that a finite measure supported on the cylinder
$S_r^{2n-1}\times\mathbb R$ can be determined by a non-harmonic cone as
well as by the boundary of a bounded domain in $\mathbb C^n.$

\smallskip

We know from \cite{T1} that the modified Fourier transform of $f\in L^1(\mathbb H^n)$ is defined by
\[\mathcal{F}_M f(z,\lambda)=\pi_\lambda(-z)W_\lambda(f^\lambda)\pi_\lambda(z),\]
where $W_\lambda(f^\lambda)$ is the Weyl transform of $f^\lambda$ and
$(z,\lambda)\in\mathbb C^n\times\mathbb R^\ast.$ This, in turn, can be expressed as
\begin{align*}
\mathcal{F}_M f(z,\lambda) &=\int_{\mathbb C^n}\pi_\lambda(-z)\pi_\lambda(w)f^\lambda(w)\pi_\lambda(z)dw, \\
&=\int_{\mathbb C^n}e^{-i\lambda\text{Im}(z\cdot\bar w)}f^\lambda(w)\pi_\lambda(w)dw.
\end{align*}
Consider the measure $\mu\in X(S_r^{2n-1}\times\mathbb R).$ Then there exists
$f\in L^1(S_r^{2n-1}\times\mathbb R)$ such that $d\mu(\zeta,t)=f(\zeta,t)d\sigma_r(\zeta) dt.$
For $(z,\lambda)\in\mathbb C^n\times\mathbb R^\ast,$ define the modified Fourier
transform of $\mu$ by
\[\mathcal{F}_M\mu(z,\lambda)=\int_{S_r^{2n-1}}e^{-i\lambda\text{Im}(z\cdot\bar\zeta)}f^\lambda(\zeta)
\pi_\lambda(\zeta)d\sigma_r(\zeta).\]
For $\lambda\in\mathbb{R}^*,$ consider the subspace
$U_\lambda=\text{ span}\{\phi_{\alpha_\lambda}^\lambda\}$ of $L^2(\mathbb{R}^n),$ where
$\phi_{\alpha_\lambda}^\lambda$ is the scaled Hermite function for some $\alpha_\lambda\in\mathbb{Z}_+^n.$
Denote $\tilde \Lambda=\mathcal C\times\mathbb R^\ast,$ where $\mathcal C$ could be a real (or complex) cone.
We have proved the following result.
\begin{proposition}
Let $\mu\in X(S_r^{2n-1}\times\mathbb R)$ and range of $\mathcal{F}_M\mu(\xi,\lambda)$ is a subspace of
$U_\lambda^\perp$ for all $(\xi,\lambda)\in\tilde \Lambda.$ If $\mathcal C$ is non-harmonic, then $\mu=0.$
\end{proposition}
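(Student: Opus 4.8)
The plan is to decode the operator-theoretic hypothesis into a family of scalar vanishing conditions, recognize each as a symplectic Fourier transform on the sphere, and then invoke Theorem \ref{th1}.

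First I would unpack the hypothesis. Since $U_\lambda$ is the one-dimensional space spanned by $\phi_{\alpha_\lambda}^\lambda$, the requirement that the range of $\mathcal F_M\mu(\xi,\lambda)$ lie in $U_\lambda^\perp$ is equivalent to
\[\langle \mathcal F_M\mu(\xi,\lambda)\psi,\,\phi_{\alpha_\lambda}^\lambda\rangle=0\qquad\text{for all }\psi\in L^2(\mathbb R^n).\]
Testing against the orthonormal basis $\psi=\phi_\beta^\lambda$ and using the definition of the special Hermite functions $\phi_{\beta,\alpha_\lambda}^\lambda(\zeta)=(2\pi)^{-n/2}\langle\pi_\lambda(\zeta)\phi_\beta^\lambda,\phi_{\alpha_\lambda}^\lambda\rangle$, this turns the hypothesis into
\[\int_{S_r^{2n-1}}e^{-i\lambda\,\text{Im}(\xi\cdot\bar\zeta)}\,f^\lambda(\zeta)\,\phi_{\beta,\alpha_\lambda}^\lambda(\zeta)\,d\sigma_r(\zeta)=0\]
for every $\beta\in\mathbb Z_+^n$ and every $(\xi,\lambda)\in\mathcal C\times\mathbb R^\ast$. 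Here $f^\lambda\in L^1(S_r^{2n-1})$ since $f\in L^1(S_r^{2n-1}\times\mathbb R)$.

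Next I would read each such integral as a symplectic Fourier transform. For fixed $\lambda$ and $\beta$ put $d\nu_{\lambda,\beta}(\zeta)=f^\lambda(\zeta)\,\phi_{\beta,\alpha_\lambda}^\lambda(\zeta)\,d\sigma_r(\zeta)$; because $f^\lambda\in L^1(S_r^{2n-1})$ and $\phi_{\beta,\alpha_\lambda}^\lambda$ is smooth and bounded on the sphere, $\nu_{\lambda,\beta}\in X(S_r^{2n-1})$. Rescaling by the dilation $\zeta\mapsto\zeta/r$ and substituting a suitable multiple of $\xi$ identifies the displayed identity with the vanishing of the unit-sphere transform $\mathcal F_S\nu_{\lambda,\beta}$ on $\mathcal C$; since $\mathcal C$ is a cone, $c\,\mathcal C=\mathcal C$ for $c\ne0$, so the precise scaling constants are immaterial and the full cone is recovered. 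Because $\mathcal C$ is non-harmonic, Theorem \ref{th1} (together with Remark \ref{rk1}$(a)$ in the case of a real cone) forces $\nu_{\lambda,\beta}=0$, that is,
\[f^\lambda(\zeta)\,\phi_{\beta,\alpha_\lambda}^\lambda(\zeta)=0\qquad\text{for a.e. }\zeta\in S_r^{2n-1},\ \text{for every }\beta.\]

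Finally I would strip off the factor $\phi_{\beta,\alpha_\lambda}^\lambda$. Collecting a single null set over the countably many $\beta$, at every remaining $\zeta$ either $f^\lambda(\zeta)=0$ or $\phi_{\beta,\alpha_\lambda}^\lambda(\zeta)=0$ for all $\beta$. But the numbers $\{(2\pi)^{n/2}\phi_{\beta,\alpha_\lambda}^\lambda(\zeta)\}_\beta$ are, up to conjugation, the Hermite coefficients of the vector $\pi_\lambda(-\zeta)\phi_{\alpha_\lambda}^\lambda$, which is nonzero since $\pi_\lambda(-\zeta)$ is unitary and $\phi_{\alpha_\lambda}^\lambda\neq0$; hence they cannot all vanish. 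Therefore $f^\lambda=0$ a.e. on $S_r^{2n-1}$ for every $\lambda\in\mathbb R^\ast$, and injectivity of the Fourier transform in the $t$-variable yields $f=0$, i.e. $\mu=0$. The main obstacle I anticipate is the first reduction: converting the abstract condition ``range in $U_\lambda^\perp$'' into the concrete scalar identities above and verifying that the resulting densities $\nu_{\lambda,\beta}$ genuinely belong to $X(S_r^{2n-1})$, so that Theorem \ref{th1} is applicable; once this bridge is in place, the completeness/unitarity argument closes the proof routinely.
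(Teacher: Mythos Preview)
Your argument is correct and follows the same overall skeleton as the paper: translate the range condition into scalar identities via matrix coefficients of $\pi_\lambda$, recognize each as a symplectic Fourier transform of a density on the sphere, and apply Theorem~\ref{th1} (with Remark~\ref{rk1}$(a)$ for real cones).

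The one genuine difference is in how many test vectors you use and how you strip off the extra factor at the end. The paper makes the single choice $\varphi=\phi_0^\lambda$ and invokes the explicit formula $\langle\pi_\lambda(\zeta)\phi_0^\lambda,\phi_{\alpha_\lambda}^\lambda\rangle=c_{\alpha_\lambda}|\lambda|^{|\alpha_\lambda|/2}\zeta^{\alpha_\lambda}e^{-|\lambda||\zeta|^2/4}$; on $S_r^{2n-1}$ this is a nonzero constant times the monomial $\zeta^{\alpha_\lambda}$, so a single application of Theorem~\ref{th1} to the density $f^\lambda(r\nu)\nu^{\alpha_\lambda}$ already gives $f^\lambda=0$ a.e.\ (monomials vanish only on a null set of the sphere). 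You instead test against every $\phi_\beta^\lambda$, obtain $f^\lambda\cdot\phi_{\beta,\alpha_\lambda}^\lambda=0$ for all $\beta$, and then close with the unitarity/completeness observation that $\{\phi_{\beta,\alpha_\lambda}^\lambda(\zeta)\}_\beta$ are the Hermite coefficients of the nonzero vector $\pi_\lambda(-\zeta)\phi_{\alpha_\lambda}^\lambda$ and hence cannot all vanish. Both routes are valid; the paper's is shorter because one well-chosen $\beta$ (namely $\beta=0$) already suffices, while yours avoids needing the explicit matrix-coefficient formula and would adapt more readily if that formula were unavailable.
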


\begin{proof}
Since, for each $\lambda\in \mathbb R^*,$ range of $\mathcal{F}_M\mu(\xi,\lambda)$
is a subspace of $U_\lambda^\perp,$ it follows that
\begin{equation}\label{exp15}
\langle\mathcal{F}_M\mu(z,\lambda)\varphi,\phi_{\alpha_\lambda}^\lambda\rangle=0
\end{equation}
for all $\varphi\in L^2(\mathbb R^n).$ We know from \cite{T1} that
$\langle\pi_\lambda(w)\phi_o^\lambda,\phi_\alpha^\lambda\rangle
=c_\alpha |\lambda|^{|\alpha|/2}w^\alpha e^{\frac{-|\lambda||w|^2}{4}}$
for all $w\in\mathbb{C}^n.$ If we choose $\varphi=\phi_o^\lambda,$ then from (\ref{exp15})
we have
\begin{align}\label{exp19}
\int_{S_r^{2n-1}}e^{-i\lambda\text{Im}(z\cdot\bar\zeta)}f^\lambda(\zeta)
c_{\alpha_\lambda}\zeta^{\alpha_\lambda} e^{\frac{-|\lambda||\zeta|^2}{4}}d\sigma_r(\zeta)=0
\end{align}
for all $z\in\mathcal C.$ This, in fact, reduces to the case of SFT
on $\mathbb C^n.$  That is, for each $\lambda\in\mathbb R^*,$ we get $\mathcal F_S(g_r^\lambda)(2r\lambda z)=0$
for all $z\in \mathcal C,$ where $g_r^\lambda(\nu)=f^\lambda(r\nu)\nu^{\alpha_\lambda}$
for $\nu\in S^{2n-1}.$ Since $(2r\lambda) \mathcal C\subseteq \mathcal C,$ in view of Theorem
\ref{th1} and Remark \ref{rk1} $(a),$ we infer that $f^\lambda=0$ if and only if $\mathcal C$
is non-harmonic. Thus, $f=0.$
\end{proof}

\begin{remark}
Let $\mu\in X(\Gamma\times\mathbb R)$ and range of $\mathcal{F}_M\mu(z,\lambda)$ is a subspace of
$U_\lambda^\perp,$ whenever $(z,\lambda)\in\Lambda\times\mathbb R^*.$ If $(\Gamma,sT\Lambda)$ is a HUP
for EFT for almost all $s\in \mathbb R,$ then Remark \ref{rk1}
$(b)$ and Equation (\ref{exp19}) allow to conclude that $\mu=0.$
\smallskip

For instance, consider $\Gamma=S^{2n-1}$
and $\Lambda=S_r^{2n-1}.$ Since the set $\{J_{n+k-1}^{-1}(0):k\in\mathbb Z_+\}$ has measure zero,
in view of the Euclidean result (\cite{V1}, Proposition $1.2$), we conclude $\mu=0.$
\end{remark}

\begin{theorem}\label{th9}
Let $\partial\Omega$ be the boundary of the bounded domain $\Omega$ in $\mathbb C^n.$
If $\mu\in X(S_r^{2n-1}\times\mathbb R)$ satisfies $\mathcal F_M\mu(\xi,\lambda)=0$ for all
$(\xi,\lambda)\in\partial\Omega\times\mathbb R^\ast,$ then $\mu=0.$
\end{theorem}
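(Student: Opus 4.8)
The plan is to convert the operator-valued hypothesis into a scalar vanishing statement by pairing $\mathcal{F}_M\mu(\xi,\lambda)$ against the ground-state Hermite function, and then to exploit the fact that the symplectic Fourier transform of a measure carried by the sphere $S_r^{2n-1}$ solves a Helmholtz equation, for which the interior Dirichlet problem on the bounded domain $\Omega$ has a uniqueness theory.

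First I would test the identity $\mathcal{F}_M\mu(\xi,\lambda)=0$ against $\varphi=\psi=\phi_0^\lambda$. Using the formula $\langle\pi_\lambda(\zeta)\phi_0^\lambda,\phi_0^\lambda\rangle=c_0\,e^{-|\lambda||\zeta|^2/4}$ (the case $\alpha=0$ of the identity quoted before (\ref{exp19})) together with the fact that $|\zeta|=r$ is constant on $S_r^{2n-1}$, this Gaussian factor reduces to the constant $c_0\,e^{-|\lambda|r^2/4}$, so that for every $\xi\in\partial\Omega$ and $\lambda\in\mathbb{R}^\ast$,
\[
G^\lambda(\xi):=\int_{S_r^{2n-1}}e^{-i\lambda\text{Im}(\xi\cdot\bar\zeta)}f^\lambda(\zeta)\,d\sigma_r(\zeta)=0 .
\]
Up to the scaling $z\mapsto 2\lambda z$, the function $G^\lambda$ is the symplectic Fourier transform of the finite measure $f^\lambda d\sigma_r$ supported on the compact sphere $S_r^{2n-1}$. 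I would then record its two crucial properties: by the Paley--Wiener theorem $G^\lambda$ extends to an entire function of $z\in\mathbb{C}^n$, hence is real-analytic on $\mathbb{R}^{2n}\cong\mathbb{C}^n$; and, writing $\text{Im}(z\cdot\bar\zeta)$ as a real-linear form in $z$ whose gradient has squared length $|\zeta|^2=r^2$, one finds $\Delta_z e^{-i\lambda\text{Im}(z\cdot\bar\zeta)}=-\lambda^2r^2\,e^{-i\lambda\text{Im}(z\cdot\bar\zeta)}$, so that $(\Delta+\lambda^2r^2)G^\lambda=0$ on all of $\mathbb{R}^{2n}$.

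Next I would fix $\lambda$ for which $\lambda^2 r^2$ is \emph{not} a Dirichlet eigenvalue of $-\Delta$ on $\Omega$. Then $G^\lambda$ solves $(\Delta+\lambda^2r^2)G^\lambda=0$ in $\Omega$ with vanishing boundary values, and uniqueness for the interior Dirichlet problem for the Helmholtz equation forces $G^\lambda\equiv 0$ on $\Omega$. Real-analyticity propagates this to $G^\lambda\equiv 0$ on all of $\mathbb{C}^n$, and injectivity of the Fourier transform yields $f^\lambda=0$ in $L^1(S_r^{2n-1})$. The exceptional set of $\lambda$, namely those with $\lambda^2 r^2$ in the Dirichlet spectrum, is countable because the Dirichlet Laplacian on the bounded domain $\Omega$ has discrete spectrum. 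Hence $f^\lambda=0$ for all $\lambda$ outside a countable set; since $\lambda\mapsto f^\lambda(\zeta)$ is continuous for almost every $\zeta$ and the complement of a countable set is dense, we get $f^\lambda=0$ for every $\lambda$, and Fourier inversion in the central variable gives $f=0$, i.e. $\mu=0$.

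The step I expect to be the main obstacle is the uniqueness assertion: one must check that $G^\lambda$, being smooth up to $\partial\Omega$ and vanishing there, genuinely belongs to $H^1_0(\Omega)$ so that the Dirichlet spectral theory applies, and then invoke the classical fact that the interior Dirichlet problem for the Helmholtz equation is uniquely solvable precisely when $\lambda^2 r^2$ avoids the discrete Dirichlet spectrum of $\Omega$. Everything else --- the reduction through $\phi_0^\lambda$, the Helmholtz computation, and the passage from ``almost every $\lambda$'' to ``every $\lambda$'' via continuity of $f^\lambda$ --- is routine once this uniqueness input is in place.
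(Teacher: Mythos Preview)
Your proposal is correct and follows essentially the same strategy as the paper's proof: both reduce to a scalar function satisfying the Helmholtz equation $(\Delta + r^2\lambda^2)u = 0$, invoke discreteness of the Dirichlet spectrum on $\Omega$ together with real-analyticity to obtain vanishing on all of $\mathbb{C}^n$ for almost every $\lambda$, and then use continuity in $\lambda$ to finish. The only cosmetic difference is that the paper pairs against arbitrary $\varphi,\psi\in L^2(\mathbb{R}^n)$ and concludes $\mathcal{F}_M\mu\equiv 0$ directly, whereas you pair specifically against $\phi_0^\lambda$ and then appeal to injectivity of the Euclidean Fourier transform; both routes are equally valid.
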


\begin{proof}
Since $\mathcal F_M\mu$ can be extended holomorphically to a function $F(.,\lambda)$ on $\mathbb C^{2n},$
taking values in $L^2(\mathbb R^n),$ it follows that $F(.,\lambda)|_{\mathbb R^{2n}}=\mathcal F_M\mu$ is
a real analytic function. Consider
\[\mathcal F_M\mu(z,\lambda)=\int_{S_r^{2n-1}}e^{-i\lambda\text{Im}(z\cdot\bar\zeta)}
f^\lambda(\zeta)\pi_\lambda(\zeta)d\sigma_r(\zeta).\]
Then \[\frac{\partial}{\partial z_j}\mathcal F_M\mu(z,\lambda)=\frac{\lambda}{2}
\int_{S_r^{2n-1}}\bar{\zeta_j}e^{-i\lambda\text{Im}(z\cdot\bar\zeta)}f^\lambda(\zeta)\pi_\lambda(\zeta)d\sigma_r(\zeta)\]
and \[\frac{\partial^2}{\partial\bar{z_j}\partial z_j}\mathcal F_M\mu(z,\lambda)=-\frac{\lambda^2}{4}\int_{S_r^{2n-1}}
\bar{\zeta_j}\zeta_je^{-i\lambda\text{Im}(z\cdot\bar\zeta)}f^\lambda(\zeta)\pi_\lambda(\zeta)d\sigma_r(\zeta).\]
This, in turn, implies
\[\triangle_z\mathcal F_M\mu(z,\lambda)+(r\lambda)^2\mathcal F_M\mu(z,\lambda)=0.\]
Now, if $\varphi,\psi\in L^2(\mathbb R^n),$ then we have
\[\triangle_z\langle\mathcal F_M\mu(z,\lambda)\varphi,\psi\rangle
+(r\lambda)^2\langle\mathcal F_M\mu(z,\lambda)\varphi,\psi\rangle=0.\]
Let $g(z,\lambda)=\langle\mathcal F_M\mu(z,\lambda)\varphi,\psi\rangle.$ Then $g$
will be a real analytic function satisfying
\[\triangle_\xi g(z,\lambda)+(r\lambda)^2g(z,\lambda)=0.\]
Hence $g(., \lambda);~\lambda\in\mathbb R^\ast$ are eigenfunctions of the Dirichlet
boundary value problem in $\mathbb C^n.$ By the discreetness of eigenvalues of
the Dirichlet problem in the bounded domain, it follows that $g(.,\lambda)=0$
for all most all $\lambda\in\mathbb R^\ast.$ Since $g(.,\lambda)$ is continuous in $\lambda,$
we infer that $g(z,\lambda)=0$ for all $(z,\lambda)\in\mathbb C^n\times\mathbb R^\ast.$
Thus, $\mu=0.$
\end{proof}

\begin{remark}
Let $u$ be a solution of the Helmholtz equation $\triangle u+c^2u=0$
on $\mathbb R^{2n}$ and $\Sigma\subset\mathbb R^{2n}$ be such that
$u=0$ on $\Sigma$ implies $u=0$ a.e. Then the statment of Theorem \ref{th9}
will remain true if we replace $\partial\Omega$ by $\Sigma.$
Regular Jordan curves and two intersecting curves separated by
angle which is an irrational multiple of $\pi,$ are examples of
such $\Sigma.$ For details, see \cite{FGJ}.
\end{remark}

\section{Uniqueness pairs for the spectral projections}\label{section5}

In this section, we derive that sphere is determining set for the spectral
projections of finite measures on $\mathbb C^n,$ which are supported on $S^{2n-1}.$
Further, we deduce that non-harmonic complex cone as well as $NA$-set can determine
the spectral projections of the above class of measures.

\smallskip

For $\mu\in X(S_r^{2n-1}),$ we define the spectral projection of $\mu$ by
\[\varphi_k^{n-1}\times\mu(z)=\int_{S_r^{2n-1}}\varphi_k^{n-1}(z-\omega)
e^{\frac{i}{2}\text{Im}(z\cdot\bar{\omega})}d\mu(\omega).\]
Then the following result holds true.
\begin{theorem}\label{th2}
Let $\mu\in X(S_{r_1}^{2n-1})$ be such that  $\varphi_k^{n-1}\times\mu(z)=0$ for all $z\in S_{r_2}^{2n-1}$
and $k\in\mathbb Z_+.$ Then $\mu=0.$
\end{theorem}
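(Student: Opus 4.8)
The plan is to exploit the Hecke--Bochner identity of Lemma \ref{lemma3C}, which computes the spectral projection of each bigraded piece of $\mu$ explicitly, together with the mutual orthogonality of bigraded spherical harmonics. First I would write $d\mu=f\,d\sigma_{r_1}$ with $f\in L^1(S_{r_1}^{2n-1})$ and decompose $f$ into its bigraded spherical harmonic projections $f=\sum_{p,q}Y_{p,q}$, where $Y_{p,q}=\Pi_{p,q}f$ is the restriction to the sphere of a polynomial $P_{p,q}\in H_{p,q}$. Since the map $\mu\mapsto\varphi_k^{n-1}\times\mu$ is bounded in the total variation norm, $|\varphi_k^{n-1}\times\mu(z)|\le\|\varphi_k^{n-1}\|_\infty\,\|\mu\|$, and the series for $f$ is Ces\`aro summable in $L^1$ by (\ref{exp2}), I may pass the decomposition through the twisted convolution and obtain $\varphi_k^{n-1}\times\mu=\sum_{p,q}\varphi_k^{n-1}\times(P_{p,q}\,d\sigma_{r_1})$ with uniform convergence.

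Next I would apply Lemma \ref{lemma3C} to each summand (with $r=r_1$). For $k\ge q$ it gives that $\varphi_k^{n-1}\times(P_{p,q}\,d\sigma_{r_1})$ is a nonzero constant multiple of $\varphi_{k-q}^{\,n+p+q-1}(r_1)\,\varphi_{k-q}^{\,n+p+q-1}(z)\,P_{p,q}(z)$, and it vanishes for $k<q$. Restricting to $|z|=r_2$, the radial factor $\varphi_{k-q}^{\,n+p+q-1}(z)$ becomes the constant $\varphi_{k-q}^{\,n+p+q-1}(r_2)$ and $P_{p,q}(r_2\omega)=r_2^{p+q}Y_{p,q}(\omega)$, so on $S_{r_2}^{2n-1}$ the $(p,q)$-summand is a scalar multiple of $Y_{p,q}$, i.e. it lies in the space of bigraded spherical harmonics of type $(p,q)$. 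Because these spaces are mutually orthogonal in $L^2(S^{2n-1})$, the hypothesis $\varphi_k^{n-1}\times\mu=0$ on $S_{r_2}^{2n-1}$ forces, for every fixed $k$ and every $(p,q)$ with $q\le k$,
\[
\frac{\Gamma(k-q+1)}{\Gamma(k+n+p)}\,r_1^{2(p+q)}\,r_2^{\,p+q}\,\varphi_{k-q}^{\,n+p+q-1}(r_1)\,\varphi_{k-q}^{\,n+p+q-1}(r_2)\,Y_{p,q}=0.
\]

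The key observation, which is exactly what makes an arbitrary radius $r_2$ admissible, is to specialize to $k=q$. Then $k-q=0$ and the Laguerre factors reduce to $\varphi_0^{\,n+p+q-1}(r)=L_0^{\,n+p+q-1}(r^2/2)e^{-r^2/4}=e^{-r^2/4}$, which never vanishes; the Gamma quotient and the powers of $r_1,r_2$ are likewise nonzero. Hence the displayed identity collapses to $Y_{p,q}=0$. Letting $(p,q)$ range over all of $\mathbb Z_+\times\mathbb Z_+$ and taking $k=q$ in each case, I conclude $Y_{p,q}=\Pi_{p,q}f=0$ for every $(p,q)$. Since then all spherical harmonic projections $\Pi_lf=\sum_{p+q=l}\Pi_{p,q}f$ vanish and $f\in L^1(S^{2n-1})$, the Ces\`aro summation (\ref{exp2}) yields $f=0$, that is $\mu=0$.

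I expect the only genuine technical point to be the justification of the termwise use of Lemma \ref{lemma3C}, i.e. interchanging the bigraded decomposition of $f$ with the twisted convolution; this is handled by the uniform total-variation continuity of $\mu\mapsto\varphi_k^{n-1}\times\mu$ together with the $L^1$-Ces\`aro convergence in (\ref{exp2}). The conceptual heart, by contrast, is entirely elementary once Lemma \ref{lemma3C} is available: the choice $k=q$ removes every degeneracy coming from zeros of Laguerre functions at $r_1$ or $r_2$, so that no restriction on the radii is needed and the result holds for \emph{every} $r_2$.
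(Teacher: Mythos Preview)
Your argument is correct, and in fact it is cleaner than the paper's own proof. Both proofs share the same scaffolding: write $d\mu=f\,d\sigma_{r_1}$, pass the Ces\`aro-summable bigraded decomposition of $f$ through the twisted convolution (using the uniform bound $|\varphi_k^{n-1}\times\mu(z)|\le M_k\|\mu\|$), apply Lemma~\ref{lemma3C} to each piece, restrict to $|z|=r_2$, and then use orthogonality of the spaces $H_{p,q}$ on the sphere to isolate the factor $\varphi_{k-q}^{\,n+p+q-1}(r_1)\,\varphi_{k-q}^{\,n+p+q-1}(r_2)\,\|\Pi_{p,q}f\|_2$.

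The divergence is in how one forces the Laguerre factors to be nonzero. The paper invokes Proposition~\ref{prop5} (ultimately Filaseta--Lam on irreducibility of Laguerre polynomials) to find, for each fixed $(p,q)$, some $k_o\ge q$ with $r_1,r_2\notin(\varphi_{k_o-q}^{\,n+p+q-1})^{-1}(0)$. You instead simply set $k=q$, so that $\varphi_{0}^{\,n+p+q-1}(r)=L_0^{\,n+p+q-1}(r^2/2)e^{-r^2/4}=e^{-r^2/4}$ never vanishes. This bypasses the number-theoretic input entirely and makes the proof self-contained and elementary once Lemma~\ref{lemma3C} is in hand. The paper's route, on the other hand, illustrates a technique (locating a good degree $k$ via irreducibility) that may be useful in related problems where the $k=q$ trick is unavailable; but for Theorem~\ref{th2} as stated, your specialization is the sharper move.
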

In order to prove Theorem \ref{th2}, we need the following results about
irreducibility of the Laguerre polynomials.
\begin{theorem}\cite{FL}\label{th4}
Let $\upsilon$ be a rational number, which is not a negative integer. Then for all
but finitely many $k\in\mathbb{Z}_+,$ the Laguerre polynomial $L_k^\upsilon$ is
irreducible over the field of rationals.
\end{theorem}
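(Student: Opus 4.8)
The plan is to prove this as Filaseta and Lam do (see \cite{FL}), by converting irreducibility over $\mathbb{Q}$ into a $p$-adic statement about the coefficients through the theory of Newton polygons, and then supplying the required primes with a Sylvester--Erd\H{o}s type theorem on prime factors of products of consecutive terms of an arithmetic progression.

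First I would clear denominators. Writing $\upsilon=a/b$ in lowest terms with $b>0$, the coefficient of $x^j$ in $L_k^\upsilon$ equals $\frac{(-1)^j}{j!(k-j)!}\prod_{i=j+1}^{k}(\upsilon+i)$, so multiplication by $b^k k!$ yields the integer polynomial
\[
g_k(x)=b^k k!\,L_k^\upsilon(x)=\sum_{j=0}^{k}(-1)^j\binom{k}{j}b^{\,j}\Big(\prod_{i=j+1}^{k}(bi+a)\Big)x^j,
\]
whose irreducibility over $\mathbb{Q}$ is equivalent to that of $L_k^\upsilon$, since the two differ by a nonzero rational constant. Here the hypothesis that $\upsilon$ is not a negative integer is exactly what guarantees $bi+a\neq 0$ for every $i\geq 1$; this keeps $\deg g_k=k$, keeps the constant term $\prod_{i=1}^k(bi+a)$ nonzero, and prevents any intermediate coefficient from degenerating. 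Because a reducible $g_k$ must have a factor of degree $m$ with $1\le m\le k/2$, it suffices to exclude such factors for all large $k$.

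Next I would attach to each prime $p$ with $p\nmid b$ the $p$-adic Newton polygon of $g_k$, namely the lower convex hull of the points $(j,v_p(a_j))$, where $v_p$ denotes the $p$-adic valuation and $a_j$ the coefficient above. Since $v_p(b)=0$ and $v_p\binom{k}{j}$ is negligible by Kummer's theorem, the valuation profile is governed by $\sum_{i=j+1}^{k}v_p(bi+a)$, which decreases step-wise in $j$ as the terms of the progression $\{bi+a\}$ divisible by $p$ are dropped. By Coleman's theorem the slopes of this polygon are precisely the $p$-adic valuations of the roots of $g_k$: a segment of horizontal length $\ell$ and slope $-s/t$ with $\gcd(s,t)=1$ accounts for $\ell$ roots of valuation $s/t$, and any factor over $\mathbb{Q}$, having integer total root-valuation, can absorb those roots only in batches that are multiples of $t$. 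Thus segments whose slope-denominators exceed $m$ force every rational factor of degree $\le m$ to avoid the corresponding roots, and this is the lever that constrains the admissible factor degrees.

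The engine that produces the needed segments is number-theoretic. For $k$ large one invokes a Sylvester--Erd\H{o}s type theorem: a product of consecutive terms $bi+a$ over any block whose length exceeds $m$ has a prime factor larger than the block length, and such a prime divides at most one term of the block, pinning a single downward step in the valuation profile and hence an edge of denominator exceeding $m$ at the desired location. Tuning one such prime to each candidate degree $1\le m\le k/2$ and combining the resulting avoidance constraints is what excludes every proper factorization, giving the irreducibility of $g_k$, and hence of $L_k^\upsilon$, once $k$ is large; the finitely many small $k$ — where the Sylvester--Erd\H{o}s input is unavailable and genuine reducibility can occur — are harmlessly excluded, which is why the conclusion holds only for all but finitely many $k$. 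I expect the main obstacle to be exactly this last synthesis: guaranteeing the requisite primes exist simultaneously for the whole range of candidate degrees, and carrying out the valuation bookkeeping (controlling the contributions of $\binom{k}{j}$ and of the primes dividing $ab$) so that the isolating steps are not disturbed and the avoidance constraints from the various primes are jointly incompatible with any factor of degree $\le k/2$.
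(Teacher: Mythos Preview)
The paper does not supply a proof of this statement at all: it is quoted verbatim as a result of Filaseta and Lam \cite{FL} and used as a black box (to feed Proposition~\ref{prop5} and then Theorem~\ref{th2}). So there is no ``paper's own proof'' to compare against; the relevant comparison is to \cite{FL} itself, and your outline is indeed a faithful high-level sketch of that argument --- clear denominators to an integer polynomial in the progression $\{bi+a\}$, read off constraints on possible factor degrees from the $p$-adic Newton polygon, and supply the needed primes via a Sylvester--Erd\H{o}s type result on prime divisors of blocks of an arithmetic progression.

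As a sketch it is honest about where the real work lies. Two places deserve sharpening if you intend this to stand as more than an outline. First, the claim that $v_p\binom{k}{j}$ is ``negligible'' is too casual: Kummer only gives $v_p\binom{k}{j}\le \lfloor \log_p k\rfloor$, and in the Filaseta--Lam argument one must actually track this contribution against the step created by the large prime to see that the isolating edge of the Newton polygon survives. Second, the ``tuning one such prime to each candidate degree $m$'' is the crux and is not a one-line consequence of Sylvester--Erd\H{o}s; in \cite{FL} this is handled through a careful lemma (their key Lemma, building on Filaseta's earlier work on Bessel and Laguerre polynomials) that converts the existence of a single suitable prime $p>m$ dividing some $bi+a$ with $k-m<i\le k$ into a Newton-polygon obstruction ruling out a degree-$m$ factor, and the finitely many exceptional $k$ arise precisely from the effective bounds in the arithmetic-progression prime result. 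Your final paragraph correctly flags this synthesis as the main obstacle, but be aware that it is not merely bookkeeping --- it is the substance of the paper you are citing.
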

Notice that the disjointness of the zero set of Laguerre functions can
be understood by the following description of the zero set of Laguerre
polynomials. This follows from Theorem \ref{th4}, and has worked out in \cite{Sri3}.
\begin{proposition}\cite{Sri3}\label{prop5}
Let $k\in\mathbb{Z}_+.$ Then for all but finitely many $k,$ the Laguerre polynomials $L_k^{n-1}$ have
distinct zeroes over the reals.
\end{proposition}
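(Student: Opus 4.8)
The plan is to read off the conclusion from the classical theory of orthogonal polynomials together with the irreducibility statement of Theorem~\ref{th4}. Since $n-1$ is a non-negative integer, the parameter $\upsilon=n-1$ satisfies $\upsilon>-1$, so $\{L_k^{n-1}\}_{k\in\mathbb Z_+}$ is the system of polynomials orthogonal on $(0,\infty)$ with respect to the weight $x^{n-1}e^{-x}$. The first step is to invoke the standard fact that the zeros of a degree-$k$ orthogonal polynomial are real, simple, and lie in the interior of the interval of orthogonality; here this shows that each $L_k^{n-1}$ has exactly $k$ distinct zeros, all contained in $(0,\infty)$. This already disposes of the realness and simplicity of the zeros of every individual $L_k^{n-1}$.

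The substantive content, as the preceding remark on disjointness of zero sets indicates, is that no real number is a zero of two different members of the family, and this is where Theorem~\ref{th4} enters. First note that, because $n-1\in\mathbb Z_+$, the coefficients $\binom{n-1+k}{k-j}/j!$ of $L_k^{n-1}$ are rational, so $L_k^{n-1}\in\mathbb Q[x]$. Applying Theorem~\ref{th4} to the non-negative rational $\upsilon=n-1$ produces an integer $k_0$ such that $L_k^{n-1}$ is irreducible over $\mathbb Q$ for every $k\geq k_0$.

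I would then argue that the zero sets are pairwise disjoint over this range. Suppose $k,k'\geq k_0$ with $k\neq k'$ and that some $x_0\in\mathbb R$ is a common zero of $L_k^{n-1}$ and $L_{k'}^{n-1}$. Then $x_0$ is algebraic over $\mathbb Q$, and since $L_k^{n-1}$ is irreducible and vanishes at $x_0$ it must coincide, up to a nonzero rational factor, with the minimal polynomial of $x_0$; the same applies to $L_{k'}^{n-1}$. Hence $L_k^{n-1}$ and $L_{k'}^{n-1}$ are rational scalar multiples of one another, which is impossible because $\deg L_k^{n-1}=k\neq k'=\deg L_{k'}^{n-1}$. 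This contradiction establishes that the zeros of $L_k^{n-1}$ and $L_{k'}^{n-1}$ are distinct whenever $k\neq k'$ are both at least $k_0$.

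Combining the two parts, for all but the finitely many $k<k_0$ the polynomial $L_k^{n-1}$ has $k$ simple real zeros, and the zero sets associated with distinct such $k$ are mutually disjoint, which is the claim. I do not anticipate a genuine obstacle beyond correctly quoting Theorem~\ref{th4}, since the realness and simplicity are classical; the only point requiring a little care is the rationality of the coefficients of $L_k^{n-1}$, which is exactly what makes the minimal-polynomial argument available and which is guaranteed by $n-1$ being an integer.
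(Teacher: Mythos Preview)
Your argument is correct and follows precisely the route the paper indicates: the proposition is quoted from \cite{Sri3} with the remark that it ``follows from Theorem~\ref{th4},'' and your proof does exactly that, combining the classical simplicity of zeros of orthogonal polynomials with the minimal-polynomial argument that two distinct irreducible polynomials in $\mathbb Q[x]$ of different degrees cannot share a real root. Since the paper gives no further details of its own, your write-up is effectively the natural completion of what is sketched there.
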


\smallskip

\begin{proof}[\textbf{Proof of Theorem \ref{th2}}]
Since $\mu\in X(S_{r_1}^{2n-1}),$ there exists $f\in L^1(S_{r_1}^{2n-1})$ such that $d\mu=fd\sigma_{r_1}.$ Thus,
\begin{equation}\label{exp8}
\varphi_k^{n-1}\times\mu(z)=\int_{S_{r_1}^{2n-1}}\varphi_k^{n-1}(z-\omega)
e^{\frac{i}{2}\text{Im}(z\cdot\bar{\omega})}f(\omega)d\sigma_{r_1}(\omega)=0,
\end{equation}
for all $k\in\mathbb Z_+$ and $z\in S_{r_2}^{2n-1}.$ As $f\in L^1(S_{r_1}^{2n-1}),$ $f$ will satisfy
\[f=\lim\limits_{m\rightarrow\infty}\sum_{l=0}^m~A_l^m(\delta)\Pi_lf,\]
where $A_l^m(\delta)=\binom{m-l+\delta}{\delta}{\binom{m+\delta}{\delta}}^{-1}$ and $\delta>n-1.$\
Further, from Lemma \ref{lemma4}, it follows that
\[f=\lim\limits_{m\rightarrow\infty}\sum_{l=0}^m\sum_{p+q=l}~A_{p+q}^m(\delta)\Pi_{p,q}f.\]
Now, from condition (\ref{exp8}), it follows that
{\footnotesize \begin{align*}
\left|\sum_{l=0}^m\sum_{p+q=l}A_{p+q}^m(\delta)\varphi_k^{n-1}\times\Pi_{p,q}f(z)\right|=
\left|\sum_{l=0}^m\sum_{p+q=l}A_{p+q}^m(\delta)\varphi_k^{n-1}\times\Pi_{p,q}f(z)-\varphi_k^{n-1}\times f(z)\right| \\
\leq M_k\int_{S_{r_1}^{2n-1}}\left|\sum_{l=0}^m\sum_{p+q=l}A_{p+q}^m(\delta)
\Pi_{p,q}f(\omega)-f(\omega)\right|d\sigma_{r_1}(\omega),
\end{align*}}\\
where $M_k=\sup_{(\omega,z)\in S_{r_1}^{2n-1}\times S_{r_2}^{2n-1}}|\varphi_k^{n-1}(z-\omega)|.$
Hence in view of (\ref{exp2}), we deduce that
\begin{equation}\label{exp5}
\lim\limits_{m\rightarrow\infty}\sum_{l=0}^m\sum_{p+q=l}~A_{p+q}^m(\delta)\varphi_k^{n-1}\times\Pi_{p,q}f=0
\end{equation}
converges uniformly in $S_{r_2}^{2n-1},$ whenever $k\in\mathbb Z_+.$ When $k\geq q,$
Lemma \ref{lemma3C} gives
\begin{equation}\label{exp12}
\int_{S^{2n-1}}\varphi_k^{n-1}(z-r_1\eta)e^{\frac{i}{2}r_1\text{Im}(z\cdot\bar{\eta})}Y_{p,q}(\eta)d\sigma(\eta)
=B_n^{k,\gamma}{r_1}^{p+q}\varphi_{k-q}^{\gamma-1}(r_1)\varphi_{k-q}^{\gamma-1}(z)P_{p,q}(z),
\end{equation}
where $B_n^{k,\gamma}=(2\pi)^{-n}\dfrac{\Gamma(k-q+1)}{\Gamma(k+n+p)}$ and $\gamma=n+p+q.$
Let $z=r_2\xi,$ where $\xi\in S^{2n-1}.$ Then from (\ref{exp5}) and (\ref{exp12}), we infer that
\begin{equation}\label{exp10}
\lim\limits_{m\rightarrow\infty}\sum_{l=0}^m\sum_{p+q=l}A_{p+q}^m(\delta)B_n^{k,\gamma}{(r_1r_2)}^{p+q}
\varphi_{k-q}^{\gamma-1}(r_1)\varphi_{k-q}^{\gamma-1}(r_2)\Pi_{p,q}f(\xi)=0.
\end{equation}
Since (\ref{exp5}) converges uniformly on $S_{r_2}^{2n-1},$ it follows that (\ref{exp10})
converges in $L^2(S^{2n-1}).$ Recall that the bi-gradted spherical harmonic projections $\Pi_{p,q}f$
are orthogonal among themselves, and $A_{p+q}^m(\delta)B_n^{k,\gamma}\neq 0$ holds true for every choice of
$p,q\in\mathbb Z_+.$ From (\ref{exp10}) we obtain that
\begin{equation}\label{exp14}
\varphi_{k-q}^{\gamma-1}(r_1)\varphi_{k-q}^{\gamma-1}(r_2)\left\|\Pi_{p,q}f\right\|_2=0,
\end{equation}
whenever $k\geq q.$ Hence by invoking Proposition \ref{prop5} for each pair of $p,q\in\mathbb{Z}_+,$
there exists $k_o\geq q$ such that $r_i\notin\left(\varphi_{k_o-q}^{n+p+q-1}\right)^{-1}(0);$
when $i=1,2.$ Hence, from (\ref{exp14}) we conclude that $\Pi_{p,q}f=0$ for all
$p,q\in\mathbb Z_+.$ Thus, $f=0.$
\end{proof}

\begin{remark}\label{rk2}
 A set, which is a determining set for any real analytic function, is called
$NA$ - set. For instance, the spiral is an $NA$ - set in the plane
(see \cite{PS_0}). Since the spectral projection $\varphi^{n-1}_k\times\mu$
can be extended holomorphically to $\mathbb C^{2n},$ the function
$\varphi^{n-1}_k\times\mu$ must be real analytic on $\mathbb C^n.$

\smallskip

Let $\Lambda$ be an NA-set for real analytic functions on $\mathbb C^n.$
If $\mu\in X(S_r^{2n-1})$ satisfies $\varphi^{n-1}_k\times\mu\vert_{\Lambda}=0$
for all $k\in\mathbb Z_+,$ then $\varphi^{n-1}_k\times\mu(z)=0$
for all $z\in\mathbb C^n.$ Thus, in view of Theorem \ref{th2}, we have $f=0.$
\end{remark}

Next, we shall prove that spectral projections of a measure $\mu\in X(S_r^{2n-1})$
can be determined by a non-harmonic complex cone.

\begin{theorem}\label{th3}
Let $\mathcal{C}$ be a non-harmonic complex cone in $\mathbb C^n.$ If $\mu\in X(S_r^{2n-1})$
satisfies $\varphi^{n-1}_k\times\mu\vert_{\mathcal{C}}=0$ for all $k\in\mathbb Z_+,$ then $\mu=0.$
\end{theorem}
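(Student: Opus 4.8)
The plan is to show that \emph{every} bi-graded spherical harmonic projection $\Pi_{p,q}f$ of the density $f$ vanishes, where $d\mu=f\,d\sigma_r$ with $f\in L^1(S_r^{2n-1})$; this forces $f=0$ by the Cesaro expansion (\ref{exp2}) together with Lemma \ref{lemma4}. Writing $\Pi_{p,q}f=P_{p,q}|_{S^{2n-1}}$ for the harmonic extension $P_{p,q}\in H_{p,q}$, and repeating the uniform-convergence estimate from the proof of Theorem \ref{th2} — which remains valid \emph{pointwise} on $\mathcal C$, since $\sup_{\omega\in S_r^{2n-1}}|\varphi_k^{n-1}(z-\omega)|<\infty$ for each fixed $z$ — Lemma \ref{lemma3C} (in the form (\ref{exp12})) gives, for every $k\in\mathbb Z_+$ and every $z\in\mathcal C$,
\[
0=\varphi_k^{n-1}\times\mu(z)=\lim_{m\to\infty}\sum_{l=0}^m\sum_{\substack{p+q=l\\ q\le k}}A_{p+q}^m(\delta)\,B_n^{k,\gamma}\,r^{p+q}\,\varphi_{k-q}^{\gamma-1}(r)\,\varphi_{k-q}^{\gamma-1}(z)\,P_{p,q}(z),
\]
with $\gamma=n+p+q$. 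Here only the finitely many $q\le k$ survive, while $p$ remains unbounded.

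Next I would exploit that $\mathcal C$ is closed under \emph{complex} scaling. Parametrise $z=\rho\,e^{i\theta}\xi$ with $\xi\in\widetilde{\mathcal C}$, $\rho>0$ and $\theta\in[0,2\pi)$; every such point lies in $\mathcal C$. Since $P_{p,q}$ is bihomogeneous, $P_{p,q}(\rho e^{i\theta}\xi)=\rho^{p+q}e^{i(p-q)\theta}P_{p,q}(\xi)$, whereas $\varphi_{k-q}^{\gamma-1}(z)=\varphi_{k-q}^{\gamma-1}(\rho)$ is radial. Averaging the identity above against $e^{-is\theta}\,d\theta$ and using the orthogonality of the characters $\{e^{is\theta}\}$ (exactly as in Lemma \ref{lemma5}) isolates, for each fixed $s\in\mathbb Z$, the diagonal $p-q=s$, leaving a \emph{finite} sum over $q$.

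Within a fixed diagonal I would then separate the individual $(p,q)$ by varying $\rho$. Up to the common factor $e^{-\rho^2/4}$, the radial functions $\rho\mapsto\rho^{p+q}\varphi_{k-q}^{n+p+q-1}(\rho)$ attached to the finitely many admissible $q$ are polynomials in $\rho$ whose lowest-order monomials $\rho^{2q+s}$ are pairwise distinct, hence linearly independent on $(0,\infty)$. Consequently each coefficient must vanish, i.e. $\varphi_{k-q}^{n+p+q-1}(r)\,P_{p,q}(\xi)=0$ for all $\xi\in\widetilde{\mathcal C}$ and all $p,q$. Invoking Proposition \ref{prop5}, for each pair $(p,q)$ I can choose $k\ge q$ with $\varphi_{k-q}^{n+p+q-1}(r)\ne0$, which yields $P_{p,q}=0$ on $\widetilde{\mathcal C}$, and hence on all of $\mathcal C$ by homogeneity.

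Finally, each $P_{p,q}\in H_{p,q}$ is a homogeneous harmonic polynomial on $\mathbb R^{2n}$. If some $P_{p,q}$ were not identically zero, then $\mathcal C\subseteq P_{p,q}^{-1}(0)$ would exhibit $\mathcal C$ as harmonic, contradicting the hypothesis. Hence $P_{p,q}\equiv0$ for every $p,q$, so $\Pi_{p,q}f=0$ for all $p,q$ and $f=0$, i.e. $\mu=0$. The delicate steps are the two-stage scaling separation — the character average followed by the linear-independence argument in $\rho$ — and the justification of interchanging the Cesaro limit in $m$ with the $\theta$-average on the \emph{non-compact} cone; the remainder parallels the proof of Theorem \ref{th2}.
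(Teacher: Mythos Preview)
Your argument is correct, but the paper's route is shorter. After reaching the same Ces\`aro identity and inserting the phase $e^{i(p-q)\theta}$, the paper does \emph{not} separate the finitely many $q$'s by varying $\rho$; instead it runs an induction on $k$. For $k=0$ only $q=0$ occurs, so $\theta$-orthogonality directly yields $\Pi_{p,0}f(\xi)=0$ (the radial coefficient is $\varphi_0^{n+p-1}(r)\varphi_0^{n+p-1}(s)=e^{-(r^2+s^2)/4}\neq0$). At step $k$ the inductive hypothesis kills all $q<k$, leaving only $q=k$, where again $k-q=0$ and the Gaussian factor is non-vanishing; $\theta$-orthogonality then gives $\Pi_{p,k}f(\xi)=0$ for every $p$. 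This bypasses both your linear-independence-in-$\rho$ step and your appeal to Proposition~\ref{prop5}. In fact your own argument could be trimmed the same way: once you have $\varphi_{k-q}^{n+p+q-1}(r)\,\Pi_{p,q}f(\xi)=0$ for all $k\ge q$, simply take $k=q$ to get $e^{-r^2/4}\,\Pi_{p,q}f(\xi)=0$, and Proposition~\ref{prop5} is not needed. What your approach buys is that it treats one value of $k$ at a time without feeding earlier conclusions back in, at the cost of the extra $\rho$-separation; the paper's induction trades that for a cleaner one-line use of $\varphi_0$.
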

\begin{proof}
From (\ref{exp10}), it follows that
\[\lim\limits_{m\rightarrow\infty}\sum_{l=0}^m\sum_{p+q=l}A_{p+q}^m(\delta)B_n^{k,\gamma}{(rs)}^{p+q}
\varphi_{k-q}^{\gamma-1}(r)\varphi_{k-q}^{\gamma-1}(s)\Pi_{p,q}f(\xi)=0,\]
whenever $s\xi\in \mathcal C$ and $k\in\mathbb Z_+.$
Since $\mathcal{C}$ is closed under complex scaling, replacing $\xi$ by $e^{i\theta}\xi,$ we obtain
\[\lim\limits_{m\rightarrow\infty}\sum_{l=0}^m\sum_{p+q=l}A_{p+q}^m(\delta)B_n^{k,\gamma}{(rs)}^{p+q}
\varphi_{k-q}^{\gamma-1}(r)\varphi_{k-q}^{\gamma-1}(s)\Pi_{p,q}f(\xi)e^{i(p-q)\theta}=0.\]
Now, by induction on $k,$ we show that each of the projection $\Pi_{p,q}f$ restricted to
$\mathcal{C}$ must be zero. For $k=0,$ the choice for $q$ is only $0.$ Since
$\{e^{ip\theta}: p\in\mathbb Z_+\}$ is an orthogonal set, we infer that $\Pi_{p,0}(f)(\xi)=0.$
Similarly, for $k=1,$ the choices for $q$ are $0$ and $1$ only. The case $q=0$ is already settled. Now
for $q=1,$ $\{e^{i(p-1)\theta}: p\in\mathbb Z_+\}$ is an orthogonal set. Hence
$\Pi_{p,1}(f)(\xi)=0.$ This, in turn, implies that each of $\Pi_{p,q}(f)$
vanishes on $\mathcal{C}.$ Thus $f=0.$
\end{proof}

\section{Benedicks-Amrein-Berthier theorem}\label{section}
In this section, we prove Benedicks-Amrein-Berthier  theorem for
the Heisenberg group.

For $\lambda=1,$ we denote $W_1(g)$ by $W(g).$ For $g\in L^2(\mathbb{C}^n),$
suppose $W(g)$ is of finite rank. Then there exists an orthonormal basis
$\{e_1,e_2,\ldots\}$ of $L^2(\mathbb{R}^n)$ such that $\mathcal{R}(W(g))=\mathcal{B}_N,$
where $\mathcal{B}_N=\text{span}\{e_1,\ldots,e_N \}$ and $\mathcal R$ stands for
the range. Define an orthogonal projection $P_N $ of $L^2(\mathbb{R}^n)$ onto
$\mathcal{B}_N.$ Let $A$ be a measurable subset of $\mathbb{C}^n.$ Define
a pair of orthogonal projections $E_A$ and $F_N$ of $L^2(\mathbb{C}^n)$ by
\begin{equation}\label{exp16}
E_A g=\chi_A g \qquad \text{ and } \qquad W(F_N g)=P_N W(g),
\end{equation}
where $\chi_A$
denotes the characteristic function of $A.$ Then
$\mathcal{R}(E_A)=\{g\in L^2(\mathbb{C}^n): g= \chi_A g \}$ and
$\mathcal{R}(F_N)=\{g\in L^2(\mathbb{C}^n): \mathcal{R}(W(g))\subseteq\mathcal{B}_N \}.$

\smallskip

First, we prove that $E_AF_N$ is a Hilbert-Schmidt operator that satisfies
$\|E_AF_N\|_{HS}^2=(2\pi)^{-n}m(A)N.$ Throughout this section, $A$  will be
considered as a set of finite Lebesgue measure.

\begin{lemma}\label{lemma2}
The operator $E_AF_N$ is an integral operator with kernel
$K(z,w)=(2\pi)^{-n}\chi_A(z)\text{tr}\left(P_N \pi(w)\pi(-z)\right),$ where $z,w\in\mathbb{C}^n.$
\end{lemma}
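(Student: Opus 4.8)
The plan is to make the operator $E_A F_N$ explicit by inverting the Weyl transform. The key input is the inversion formula for the Weyl transform (see \cite{T1,T2}): for $h\in L^2(\mathbb C^n)$ one has
\[
h(z)=(2\pi)^{-n}\,\text{tr}\big(\pi(z)^{\ast}W(h)\big).
\]
I would apply this to $h=F_N g$ and use the defining relation $W(F_N g)=P_N W(g)$ from (\ref{exp16}) to obtain
\[
(F_N g)(z)=(2\pi)^{-n}\,\text{tr}\big(\pi(z)^{\ast}P_N W(g)\big).
\]
The remainder of the argument is then a matter of inserting the definition of $W(g)$ and rearranging the trace.

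Next I would substitute $W(g)=\int_{\mathbb C^n}g(w)\pi(w)\,dw$ and move the trace inside the integral, giving $(F_N g)(z)=(2\pi)^{-n}\int_{\mathbb C^n}g(w)\,\text{tr}(\pi(z)^{\ast}P_N\pi(w))\,dw$. Two elementary facts then reshape the integrand into the claimed form: first, $\pi(z)^{\ast}=\pi(-z)$, which holds because $\pi(z)\pi(-z)=e^{\frac{i}{2}\text{Im}(z\cdot\overline{-z})}\pi(0)=I$ and $\pi(z)$ is unitary; second, the cyclicity of the trace, which lets me write $\text{tr}(\pi(z)^{\ast}P_N\pi(w))=\text{tr}(P_N\pi(w)\pi(-z))$. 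Since $P_N$ is the finite-rank projection onto $\mathcal B_N=\text{span}\{e_1,\dots,e_N\}$, this trace is the finite sum $\sum_{j=1}^{N}\langle\pi(w)\pi(-z)e_j,e_j\rangle$. Multiplying by $\chi_A$ (that is, applying $E_A$) then yields
\[
(E_A F_N g)(z)=\int_{\mathbb C^n}K(z,w)\,g(w)\,dw,\qquad K(z,w)=(2\pi)^{-n}\chi_A(z)\,\text{tr}\big(P_N\pi(w)\pi(-z)\big),
\]
which is exactly the assertion.

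The one step deserving genuine care, and the main obstacle, is the interchange of the trace with the integral defining $W(g)$. I would settle this first for $g$ in the dense class $L^1\cap L^2(\mathbb C^n)$, where $W(g)=\int g(w)\pi(w)\,dw$ converges as a Bochner integral of bounded operators; composing on the left with the finite-rank operator $\pi(z)^{\ast}P_N$ keeps the integrand trace class, and since the trace is continuous on the trace class the interchange is legitimate (equivalently, it reduces to interchanging the integral with each of the finitely many matrix coefficients $\langle\pi(w)\pi(-z)e_j,e_j\rangle$). The bound $|\langle\pi(w)\pi(-z)e_j,e_j\rangle|\le\|e_j\|^2=1$ shows that $|\,\text{tr}(P_N\pi(w)\pi(-z))\,|\le N$, so the kernel $K$ is bounded and supported on $A\times\mathbb C^n$; this makes the integral operator well defined and, by density, extends the identity to all relevant $g$. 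I would record this uniform bound explicitly, since it is precisely the quantity that drives the subsequent computation $\|E_A F_N\|_{HS}^2=(2\pi)^{-n}m(A)N$.
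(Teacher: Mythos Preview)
Your proof is correct and follows essentially the same route as the paper: apply the Weyl inversion formula to $F_Ng$, use $W(F_Ng)=P_NW(g)$, write $\pi(z)^\ast=\pi(-z)$, cycle the trace, substitute $W(g)=\int g(w)\pi(w)\,dw$, and then multiply by $\chi_A$. The paper's argument is slightly terser and does not pause to justify the interchange of trace and integral; your added paragraph handling that point via the finite-rank projection and the bound $|\text{tr}(P_N\pi(w)\pi(-z))|\le N$ is a welcome refinement but not a different method.
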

\begin{proof}
For $g\in L^2(\mathbb{C}^n)$, we have $W(F_N g)=P_N W(g)$. By inversion formula for the
Weyl transform, we have
\begin{align*}
(F_Ng)(z)&=(2\pi)^{-n}\text{tr}(\pi(z)^{*}W(F_Ng))=(2\pi)^{-n}\text{tr}(\pi(-z)P_NW(g))\\
&=(2\pi)^{-n}\text{tr}(P_NW(g) \pi(-z))\\
&=(2\pi)^{-n}\int_{\mathbb{C}^n} g(w) \text{tr}\left(P_N \pi(w)\pi(-z)\right)dw.
\end{align*}
Hence, we can write
\begin{align*}
(E_AF_Ng)(z)&=\chi_A(z)(F_Ng)(z)
=(2\pi)^{-n}\chi_A(z)\int_{\mathbb{C}^n} g(w)\text{tr} \left(P_N \pi(w)\pi(-z)\right)dw\\
&=\int_{\mathbb{C}^n} g(w)K(z,w)dw,
\end{align*}
where $K(z,w)=(2\pi)^{-n}\chi_A(z)\text{tr}\left(P_N \pi(w)\pi(-z)\right)$.
By this, we infer that $E_AF_N$ is an integral operator with kernel $K.$
\end{proof}

\begin{lemma}\label{lemma6}
$E_AF_N$ is Hilbert-Schmidt and $\|E_AF_N\|_{HS}^2=(2\pi)^{-n}m(A)N$.
\end{lemma}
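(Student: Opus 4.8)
The plan is to exploit the explicit kernel $K(z,w) = (2\pi)^{-n}\chi_A(z)\,\text{tr}\left(P_N\pi(w)\pi(-z)\right)$ supplied by Lemma \ref{lemma2}, together with the standard fact that an integral operator is Hilbert-Schmidt precisely when its kernel lies in $L^2(\mathbb{C}^n\times\mathbb{C}^n)$, in which case $\|E_AF_N\|_{HS}^2 = \int_{\mathbb{C}^n}\int_{\mathbb{C}^n}|K(z,w)|^2\,dw\,dz$. Thus it suffices to evaluate the inner integral $\int_{\mathbb{C}^n}|\text{tr}(P_N\pi(w)\pi(-z))|^2\,dw$ and to show it equals $(2\pi)^n N$ independently of $z$; integrating the surviving factor $(2\pi)^{-2n}\chi_A(z)$ over $z$ then produces $(2\pi)^{-n}m(A)N$.

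First I would simplify the operator $\pi(w)\pi(-z)$ by means of the group law. Since $\pi(z)=\pi(z,0)$ and $\pi(z,t)=e^{it}\pi(z)$ for $\lambda=1$, a direct computation from $(w,0)\cdot(-z,0)=(w-z,-\tfrac{1}{2}\text{Im}(w\cdot\bar z))$ gives $\pi(w)\pi(-z)=e^{-\frac{i}{2}\text{Im}(w\cdot\bar z)}\pi(w-z)$. Writing $P_N=\sum_{j=1}^N\langle\,\cdot\,,e_j\rangle e_j$ and using $\text{tr}(P_N B)=\sum_{j=1}^N\langle Be_j,e_j\rangle$, the trace becomes $\text{tr}(P_N\pi(w)\pi(-z))=e^{-\frac{i}{2}\text{Im}(w\cdot\bar z)}\sum_{j=1}^N\langle\pi(w-z)e_j,e_j\rangle$, which in terms of the Fourier-Wigner transform reads $(2\pi)^{n/2}e^{-\frac{i}{2}\text{Im}(w\cdot\bar z)}\sum_{j=1}^N V_{e_j}^{e_j}(w-z)$.

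Next, taking moduli kills the unimodular phase, giving $|\text{tr}(P_N\pi(w)\pi(-z))|^2=(2\pi)^n\bigl|\sum_{j=1}^N V_{e_j}^{e_j}(w-z)\bigr|^2$. Substituting $u=w-z$ (translation invariance of Lebesgue measure) reduces the inner integral to $(2\pi)^n\int_{\mathbb{C}^n}\bigl|\sum_{j=1}^N V_{e_j}^{e_j}(u)\bigr|^2\,du$. Expanding the square and applying the orthogonality relation (\ref{exp28}), namely $\int_{\mathbb{C}^n}V_{e_j}^{e_j}(u)\overline{V_{e_k}^{e_k}(u)}\,du=|\langle e_j,e_k\rangle|^2=\delta_{jk}$ since $\{e_j\}$ is orthonormal, collapses the double sum to $\sum_{j=1}^N 1=N$. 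Hence the inner integral equals $(2\pi)^n N$, and assembling the pieces yields $\|E_AF_N\|_{HS}^2=(2\pi)^{-n}m(A)N<\infty$, which simultaneously establishes the Hilbert-Schmidt property and gives the claimed value.

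The computation is essentially forced once the kernel from Lemma \ref{lemma2} is in hand, so I do not expect a deep obstacle. The one spot demanding care is the algebraic reduction of $\pi(w)\pi(-z)$ via the group law and the attendant bookkeeping of the powers of $2\pi$, along with checking that (\ref{exp28}) applies with the diagonal choice $\varphi_l=\psi_l=e_j$. Finiteness of both $m(A)$ (assumed throughout this section) and $N$ is exactly what guarantees $K\in L^2(\mathbb{C}^n\times\mathbb{C}^n)$, closing the argument.
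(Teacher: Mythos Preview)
Your proposal is correct and follows essentially the same route as the paper: both start from the kernel of Lemma~\ref{lemma2}, reduce $\pi(w)\pi(-z)$ to a unimodular phase times $\pi(w-z)$, translate to make the inner integral $z$-independent, and then invoke the orthogonality relation~(\ref{exp28}) for the Fourier--Wigner transforms $V_{e_j}^{e_j}$ to collapse the sum to $N$. The only cosmetic difference is that you track the $(2\pi)^{n/2}$ normalization of $V_\varphi^\psi$ explicitly, whereas the paper leaves this implicit in the final appeal to~(\ref{exp28}).
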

\begin{proof}
From Lemma \ref{lemma2}, we know that  $E_AF_N$ is an integral operator with kernel $K(z,w).$
Therefore,
\begin{align*}
\|E_AF_N\|_{HS}^2&= \int_{\mathbb{C}^n}\int_{\mathbb{C}^n}|K(z,w)|^2 dw dz \\
&=(2\pi)^{-2n}\int_{\mathbb{C}^n}|\chi_A(z)|^2 \left(\int_{\mathbb{C}^n}
|\text{tr} \left(P_N \pi(w)\pi(-z)\right)|^2 dw\right)dz \\
&=(2\pi)^{-2n}\int_{\mathbb{C}^n}\chi_A(z) \left(\int_{\mathbb{C}^n}\Big|
\sum\limits_{j=1}^N \langle\pi(w)\pi(-z)e_j,e_j\rangle \Big|^2 dw\right)dz
\end{align*}
Since $\pi(w)\pi(z)=e^{\frac{i}{2}Im(w\cdot\bar{z})}\pi(w+z),$ we get
\begin{align*}
\|E_AF_N\|_{HS}^2&=(2\pi)^{-2n}\int_{\mathbb{C}^n}\chi_A(z) \int_{\mathbb{C}^n}\Big|
e^{-\frac{i}{2}Im(w\cdot\bar{z})}\sum\limits_{j=1}^N \langle\pi(w-z)e_j,e_j\rangle \Big|^2 dwdz \\
&=(2\pi)^{-2n}\int_{\mathbb{C}^n}\chi_A(z) \int_{\mathbb{C}^n}\Big|\sum\limits_{j=1}^N
\langle\pi(w)e_j,e_j\rangle \Big|^2 dwdz.
\end{align*}
Hence, from the orthogonality relation (\ref{exp28}) of the Fourier-Winger transform,
the lemma will be followed.
\end{proof}

We need the following result from \cite{AB} that describes an interesting
property of measurable sets having finite measure. Denote $w A=\{z\in \mathbb{C}^n:z-w\in A\}.$

\begin{lemma}\cite{AB}\label{lemma6.3}
Let $B$ be a measurable set in $\mathbb{C}^n$ with $0<m(B)<\infty.$ If $B_0$
is a measurable subset of $B$ with $m(B_0)>0,$ then for $\epsilon>0,$ there exists
$w\in \mathbb{C}^n$ such that \[m(B)<m(B\cup w B_0)<m(B)+\epsilon.\]
\end{lemma}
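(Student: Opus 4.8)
The plan is to prove Lemma \ref{lemma6.3}, which asserts that by translating a positive-measure subset $B_0$ of a finite-measure set $B$ one can increase the measure of $B$ by an arbitrarily small prescribed amount. The guiding intuition is that for a generic translation vector $w$, the translate $wB_0$ overlaps $B$ only slightly, so that $m(B \cup wB_0) = m(B) + m(wB_0 \setminus B)$ is close to $m(B) + m(B_0)$; the task is to choose $w$ so that the overlap $m(wB_0 \cap B)$ is positive but as small as we please, landing the total strictly between $m(B)$ and $m(B)+\epsilon$. First I would express the overlap as a convolution-type integral: define $\Phi(w) = m(wB_0 \cap B) = \int_{\mathbb{C}^n} \chi_B(z)\,\chi_{B_0}(z-w)\,dz = (\chi_B * \tilde\chi_{B_0})(w)$, where $\tilde\chi_{B_0}(u) = \chi_{B_0}(-u)$. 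Since $\chi_B, \chi_{B_0} \in L^1 \cap L^2(\mathbb{C}^n)$, the function $\Phi$ is continuous, bounded, and tends to $0$ as $|w| \to \infty$ (by continuity of translation in $L^1$ together with the finiteness of the measures).

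The key analytic facts I would establish are the following. At $w = 0$ we have $\Phi(0) = m(B_0 \cap B) = m(B_0) > 0$, while $\Phi(w) \to 0$ as $|w| \to \infty$. Since $\Phi$ is continuous, its range is an interval (by the intermediate value theorem along any ray out to infinity) containing values arbitrarily close to $0$ from above. Hence for every sufficiently small $\delta$ with $0 < \delta < m(B_0)$ there exists $w$ with $\Phi(w) = \delta$; equivalently, for every $\epsilon > 0$ we may select $w$ so that $0 < m(wB_0 \cap B) < \epsilon$. Having fixed such a $w$, I would compute
\begin{align*}
m(B \cup wB_0) &= m(B) + m(wB_0) - m(wB_0 \cap B) \\
&= m(B) + m(B_0) - \Phi(w),
\end{align*}
using translation-invariance of Lebesgue measure in the form $m(wB_0) = m(B_0)$. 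The strict lower bound $m(B) < m(B \cup wB_0)$ holds because $\Phi(w) < m(B_0)$ guarantees $m(B_0) - \Phi(w) > 0$.

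The remaining point is to obtain the upper bound $m(B\cup wB_0) < m(B)+\epsilon$. This is where I must be careful about exactly which quantity is being made small. If one simply wants $m(B\cup wB_0)$ close to $m(B)$, the relevant gap is $m(B_0) - \Phi(w) = m(wB_0 \setminus B)$, so I would instead arrange $\Phi(w)$ to be close to $m(B_0)$ rather than close to $0$ — that is, exploit continuity of $\Phi$ near $w=0$, where $\Phi(0) = m(B_0)$. By continuity there is a neighborhood of the origin on which $m(B_0) - \Phi(w) < \epsilon$, and on which $w \neq 0$ can be chosen so that simultaneously $\Phi(w) < m(B_0)$ (strictly), which holds for a set of positive measure of nearby $w$ unless $wB_0 \subseteq B$ for all small $w$, an exceptional situation ruled out by $m(B) < \infty$. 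I expect the main obstacle to be precisely this simultaneous control: I need a translation $w$ for which the overlap is strictly smaller than $m(B_0)$ yet the newly added portion $m(wB_0\setminus B)$ is below $\epsilon$. To secure strictness I would argue that if $m(wB_0\setminus B) = 0$ for all $w$ in a neighborhood of $0$, then $B_0$ together with all its small translates lies in $B$ up to null sets, forcing $B$ to contain a set invariant under small translations and hence (being nonnull and translation-stable) to have infinite measure, contradicting $0 < m(B) < \infty$. This dichotomy yields a genuine $w$ with $0 < m(wB_0\setminus B) < \epsilon$, and substituting back gives $m(B) < m(B\cup wB_0) < m(B) + \epsilon$, completing the argument.
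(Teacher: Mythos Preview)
The paper does not supply a proof of this lemma; it is simply quoted from Amrein--Berthier \cite{AB}. So there is no ``paper's own proof'' to compare against, and your argument must stand on its own.

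Your approach via the continuous function $\Phi(w)=m(wB_0\cap B)=(\chi_B*\tilde\chi_{B_0})(w)$ is the standard one and is correct in spirit, but the write-up takes an unnecessary detour and leaves the final step under-justified. You already observe, via the intermediate value theorem along a ray, that $\Phi$ is continuous with $\Phi(0)=m(B_0)$ and $\Phi(w)\to 0$ as $|w|\to\infty$; hence $\Phi$ attains \emph{every} value in $(0,m(B_0))$, not merely values close to~$0$. Using this directly finishes the proof in one line: assuming without loss of generality $\epsilon<m(B_0)$, choose $w$ with $\Phi(w)=m(B_0)-\tfrac{\epsilon}{2}$; then
\[
m(B\cup wB_0)=m(B)+m(B_0)-\Phi(w)=m(B)+\tfrac{\epsilon}{2},
\]
which lies strictly between $m(B)$ and $m(B)+\epsilon$.

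Your second half --- arguing by continuity of $\Phi$ near $0$ and then excluding the ``exceptional'' case where $wB_0\subseteq B$ (mod null) for all small $w$ --- is therefore superfluous. It is also the weakest part of the proposal: the claim that such translation-stability would force $m(B)=\infty$ is plausible but not proved, since passing from ``$m(wB_0\setminus B)=0$ for each $w$ in a neighbourhood'' to a statement about an uncountable union requires an extra argument (e.g.\ integrating $w\mapsto m(wB_0\setminus B)$ over that neighbourhood and applying Fubini). Drop this paragraph and invoke the IVT for the correct target value of $\Phi$; the proof then becomes clean and complete.
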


Now, for orthogonal projections $E$ and $F$ on a Hilbert space $\mathcal H,$
let $E\cap F$ denote the orthogonal projection of $\mathcal H$ onto
$\mathcal{R}(E)\cap \mathcal{R}(F).$ Then, we have the relation
\begin{align}\label{exp24}
\Vert E\cap F \Vert_{HS}^2=\dim\mathcal{R}( E\cap F)\leq \Vert EF \Vert_{HS}^2.
\end{align}
We abbreviate  $A^c=\mathbb{C}^n\smallsetminus A$ and $F_{N}^{\perp}=I-F_N.$
Let $S$ be a closed subspace of $L^2(\mathbb{R}^n).$ Define $F_S$ by
$W(F_Sg)=P_SW(g),$ where $P_S$ is the orthogonal projection of $L^2(\mathbb{R}^n)$
onto $S$ and $g\in L^2(\mathbb{C}^n).$ In particular, if $S=\mathcal{B}_N,$
then $F_S=F_N.$

\begin{proposition}\label{prop4}
Let $A \subset \mathbb{C}^n$ with finite measure, and $S$ be a closed subspace of $L^2(\mathbb{R}^n).$
Then, either $E_A\cap F_S=0$ or for every $\epsilon'>0,$ there exists $\tilde{A}\supset A$ with
$m(\tilde{A}\smallsetminus A)<\epsilon'$ such that $\mathcal{R}(E_{\tilde{A}}\cap F_S)$ is of
infinite dimensional.
\end{proposition}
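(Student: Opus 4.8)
The proposition is an either/or, so I only need to treat the case $E_A\cap F_S\neq 0$ and then produce, for a prescribed $\epsilon'>0$, an enlargement $\tilde A\supset A$ with $m(\tilde A\setminus A)<\epsilon'$ such that $\mathcal R(E_{\tilde A}\cap F_S)$ contains infinitely many linearly independent functions. The structural input that drives the argument is a \emph{twisted translation} that preserves $\mathcal R(F_S)$. For $v\in\mathbb C^n$ set $T_v g(z)=e^{\frac{i}{2}\text{Im}(z\cdot\bar v)}g(z-v)$. Using $\pi(u)\pi(v)=e^{\frac{i}{2}\text{Im}(u\cdot\bar v)}\pi(u+v)$ and the change of variable $z=u+v$ (note $\text{Im}(v\cdot\bar v)=0$), one gets
\[W(g)\pi(v)=\int_{\mathbb C^n}g(u)\,\pi(u)\pi(v)\,du=W(T_vg).\]
Since $\pi(v)$ is unitary, right multiplication by $\pi(v)$ leaves the range unchanged, so $\mathcal R(W(T_vg))=\mathcal R(W(g))$. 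Hence $g\in\mathcal R(F_S)$ implies $T_vg\in\mathcal R(F_S)$, while $\lvert T_vg\rvert=\lvert g(\cdot-v)\rvert$ shows $T_vg$ is supported on the translate $v\,\text{supp}\,g$ (in the notation $vB=\{z:z-v\in B\}$ of Lemma \ref{lemma6.3}).

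Now fix $0\neq f\in\mathcal R(E_A\cap F_S)$ and write $B_0=\text{supp}\,f$, so $B_0\subseteq A$ and $m(B_0)>0$. I build inductively an increasing chain $A=A_0\subseteq A_1\subseteq\cdots$ of finite-measure sets and vectors $w_1,w_2,\dots$ such that, with $f_n:=T_{w_n}f$, one has $\text{supp}\,f_n=w_nB_0\subseteq A_n$ and $m(A_n\setminus A_{n-1})<\epsilon'2^{-n-1}$. At the $n$-th step apply Lemma \ref{lemma6.3} with $B=A_{n-1}$ and the subset $B_0\subseteq A\subseteq A_{n-1}$ (so $0<m(B_0)\le m(A_{n-1})<\infty$): there is $w_n$ with
\[m(A_{n-1})<m\!\left(A_{n-1}\cup w_nB_0\right)<m(A_{n-1})+\epsilon'2^{-n-1}.\]
Put $A_n=A_{n-1}\cup w_nB_0$. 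The right inequality gives the measure control, and summing yields $m(A_n\setminus A)<\epsilon'/2$; the strict left inequality forces $m\!\left(w_nB_0\setminus A_{n-1}\right)>0$, a fact I use next.

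The family $\{f_n\}$ is linearly independent. Each $f_n\in\mathcal R(F_S)$ by the twisted-translation invariance above, while $f_1,\dots,f_{n-1}$, being supported in $A_{n-1}$, vanish a.e.\ on the positive-measure set $w_nB_0\setminus A_{n-1}$. There $f_n(z)=e^{\frac{i}{2}\text{Im}(z\cdot\bar w_n)}f(z-w_n)$ is nonzero a.e.\ (since $z-w_n\in B_0$, where $f\neq0$), so $f_n$ cannot be a linear combination of $f_1,\dots,f_{n-1}$. Setting $\tilde A=\bigcup_{n\ge1}A_n$ we get $m(\tilde A\setminus A)\le\epsilon'/2<\epsilon'$, and every $f_n$ is supported in $\tilde A$ and lies in $\mathcal R(F_S)$; hence $\{f_n\}_{n\ge1}\subset\mathcal R(E_{\tilde A}\cap F_S)$ is an infinite linearly independent set, so $\mathcal R(E_{\tilde A}\cap F_S)$ is infinite dimensional.

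The routine parts are the measure bookkeeping and the change of variable computing $W(T_vg)$. The part that carries the proof, and that must be pinned down exactly, is the observation that it is \emph{right} multiplication $W(g)\mapsto W(g)\pi(v)$, realized by $T_v$, that preserves the range condition defining $F_S$ (left multiplication by $\pi(v)$ would instead rotate the range to $\pi(v)S$ and destroy membership in $\mathcal R(F_S)$). Coupling this single structural fact with Lemma \ref{lemma6.3}, whose strict lower inequality is precisely what guarantees a fresh region of positive measure on which the new function is nonzero, is the crux; the remainder is the standard Amrein--Berthier iteration.
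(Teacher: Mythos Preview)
Your proof is correct and follows essentially the same approach as the paper's: the same twisted translation $T_v g(z)=e^{\frac{i}{2}\text{Im}(z\cdot\bar v)}g(z-v)$, the same operator identity $W(T_vg)=W(g)\pi(v)$ (the paper derives it via matrix coefficients $\langle W(g_l)\varphi,\psi\rangle$, you state it directly), the same iterative enlargement via Lemma~\ref{lemma6.3}, and the same support-based linear independence argument. Your explicit remark that it is \emph{right} multiplication by $\pi(v)$ which preserves $\mathcal R(W(g))$, and hence membership in $\mathcal R(F_S)$, is a clarifying touch the paper leaves implicit.
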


\begin{proof}
If $E_A\cap F_S\neq 0,$ then there exists a non-zero function $g_0\in\mathcal{R}( E_A\cap F_S).$
Let $A_0=\{x\in A: g_0(x)\neq 0\}$ and $\tilde{A}_1=A.$ By Lemma \ref{lemma6.3},
for $\epsilon=\frac{\epsilon'}{2^l}$, $B_0=A_0$ and $B=\tilde{A}_l,$ there exists
$w_l\in \mathbb{C}^n$ such that
\begin{equation}\label{exp30}
m(\tilde{A}_l)<m(\tilde{A}_l\cup w_lA_0)<m(\tilde{A}_l)+\frac{\epsilon'}{2^l}.
\end{equation}
Put $\tilde{A}_{l+1}=\tilde{A}_l\cup w_lA_0$ and $\tilde A=\bigcup\limits_{l=1}^\infty\tilde{A}_l.$
Then $\tilde A_l$ is a non-decreasing sequence, and hence from (\ref{exp30}) it follows that
$m(\tilde{A}\smallsetminus A)<\epsilon'.$
For $l\in \mathbb{N},$ consider $g_l(z)=e^{\frac{i}{2}Im(z\cdot \bar{w_l})} g_0(z-w_l)$.
We show that $g_l\in \mathcal{R}( E_{\tilde A}\cap F_S)$ for each $l\in \mathbb{N},$ and
they are linearly independent. Let $\mathcal{B}_S$ be an orthonormal basis of $S.$ Then
we can extend  $\mathcal{B}_S$ to an orthonormal basis  $\mathcal{B}$ of $L^2(\mathbb{R}^n).$
For $\varphi \in L^2(\mathbb{R}^n)$ and $\psi\in \mathcal{B}\smallsetminus\mathcal{B}_S,$
we have
\begin{align*}
\langle W(g_l)\varphi,\psi\rangle &=\int_{\mathbb{C}^n}
g_l(z) \langle \pi(z)\varphi,\psi\rangle dz\\
&=\int_{\mathbb{C}^n} e^{\frac{i}{2}Im(z\cdot \bar{w_l})} g_0(z-w_l)
\langle \pi(z)\varphi,\psi\rangle dz \\
&=\int_{\mathbb{C}^n} e^{\frac{i}{2}Im(z\cdot \bar{w_l})} g_0(z)
\langle \pi(z+w_l)\varphi,\psi\rangle dz.
\end{align*}
Since $\pi(z)\pi(w)=e^{\frac{i}{2}Im(z\cdot\bar{w})}\pi(z+w),$ we get
\begin{align*}
\langle W(g_l)\varphi,\psi\rangle
&=\int_{\mathbb{C}^n} g_0(z)\langle \pi(z)\pi(w_l)\varphi,\psi\rangle dz\\
&=\int_{\mathbb{C}^n} g_0(z) \langle \pi(z)\tilde{\varphi},\psi\rangle dz\\
&=\langle W(g_0)\tilde{\varphi},\psi\rangle=0.
\end{align*}
Hence $\mathcal{R}(W(g_l))\subseteq\mathcal{B}_S.$ Let $A_l=A_{l-1}\cup w_lA_0.$
Then $\tilde{A}_{l+1}=\tilde{A}_l\cup A_l.$ Thus,
$m(A_l\smallsetminus A_{l-1})\geq m(\tilde{A}_{l+1}\smallsetminus \tilde{A}_l)>0.$
Let $s\in \mathbb{N}.$ Since, $A_s=A_0\cup w_1A_0\cup\cdots\cup w_s A_0$ and $g_l(z)=0$ on $(w_lA_0)^c$,
we have $E_{A_s}g_l=g_l$ for $l=0,1,\ldots,s$. Furthermore, $E_{A_s \smallsetminus A_{s-1}}g_l=0$ for
$l=0,\ldots,s-1$ and $E_{A_s \smallsetminus A_{s-1}}g_s\neq 0$. Therefore, it shows that $g_s$ is not a
linear combination of $g_0,\ldots,g_{s-1}.$ Since $s$ is arbitrary, $\{g_l:l\in \mathbb{N}\}$
is a linearly independent set in $\mathcal{R}( E_{\tilde A}\cap F_S).$
This proves the required proposition.
\end{proof}

\begin{proposition}\label{prop2}
Let $A$ be a measurable subset of $\mathbb{C}^n$ having finite Lebesgue measure.
Then the projection $E_A\cap F_N=0$.
\end{proposition}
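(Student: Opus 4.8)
The plan is to argue by contradiction, playing the dichotomy of Proposition \ref{prop4} against the finite Hilbert--Schmidt bound of Lemma \ref{lemma6}. Suppose, to the contrary, that $E_A\cap F_N\neq 0.$ Taking $S=\mathcal{B}_N$ so that $F_S=F_N,$ Proposition \ref{prop4} then guarantees that for \emph{every} $\epsilon'>0$ one can find a set $\tilde A\supset A$ with $m(\tilde A\smallsetminus A)<\epsilon'$ for which $\mathcal{R}(E_{\tilde A}\cap F_N)$ is infinite dimensional. I would fix one such $\epsilon',$ say $\epsilon'=1,$ and produce the corresponding $\tilde A.$

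Next I would exploit that $\tilde A$ still has finite measure, since $m(\tilde A)\leq m(A)+\epsilon'<\infty.$ Consequently Lemma \ref{lemma6}, applied with $\tilde A$ in place of $A,$ shows that $E_{\tilde A}F_N$ is Hilbert--Schmidt with
\[\|E_{\tilde A}F_N\|_{HS}^2=(2\pi)^{-n}m(\tilde A)N<\infty.\]
Combining this with the general operator inequality (\ref{exp24}) yields
\[\dim\mathcal{R}(E_{\tilde A}\cap F_N)=\|E_{\tilde A}\cap F_N\|_{HS}^2\leq\|E_{\tilde A}F_N\|_{HS}^2<\infty,\]
which directly contradicts the infinite dimensionality obtained above. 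Hence the second alternative of Proposition \ref{prop4} is impossible, and we must conclude $E_A\cap F_N=0.$

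The substance of the argument is almost entirely front-loaded into the two preceding results, so there is no serious computational obstacle here. The one point that genuinely matters — and the crux of the whole Benedicks--Amrein--Berthier scheme — is the observation that the measure enlargement in Proposition \ref{prop4} is harmless: it inflates $m(A)$ by at most $\epsilon',$ keeping $m(\tilde A)$ finite, so the Hilbert--Schmidt estimate of Lemma \ref{lemma6} survives the passage from $A$ to $\tilde A.$ It is precisely this finiteness that forces the infinite-dimensional alternative to collapse, leaving the trivial intersection as the only possibility.
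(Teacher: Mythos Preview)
Your proof is correct and follows exactly the paper's approach: combine the Hilbert--Schmidt bound of Lemma~\ref{lemma6} and inequality~(\ref{exp24}) to show $\dim\mathcal{R}(E_{\tilde A}\cap F_N)<\infty$ for any finite-measure $\tilde A,$ which rules out the infinite-dimensional alternative in Proposition~\ref{prop4}. The paper states this in two lines as a direct corollary, whereas you have spelled out the contradiction explicitly, but the content is identical.
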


\begin{proof}
In view of (\ref{exp24}) and Lemma \ref{lemma6}, we obtain
\[\dim\mathcal{R}( E_{\tilde{A}}\cap F_N) \leq (2\pi)^{-n}m(\tilde{A})N <\infty. \]
Therefore, as a corollary of Proposition \ref{prop4}, we get $E_A\cap F_N=0.$
\end{proof}

\begin{remark}
If $0<m(A)<\infty,$ then $\dim\mathcal{R}(E_A)=\infty.$ Now, in view of
Proposition \ref{prop2} and the fact that $E_A=(E_A\cap F_N) +(E_A\cap F_{N}^{\perp})=(E_A\cap F_{N}^{\perp}),$
it follows that $\dim\mathcal{R}(E_A\cap F_{N}^{\perp})=\infty.$ Since $m(A^c)=\infty,$
there exists a measurable set $B\subseteq A^c$ satisfying $0<m(B)<\infty.$ Hence
$\mathcal{R}(E_{A^c}\cap F_{N}^{\perp})\supseteq \mathcal{R}(E_B\cap F_{N}^{\perp})$.
This implies $\dim\mathcal{R}(E_{A^c}\cap F_{N}^{\perp})=\infty$.
Similarly, $\dim\mathcal{R}(E_{A^c}\cap F_N)=\infty$.
\end{remark}

The following theorem  is our main result of this section, which is analogous to
Benedicks-Amrein-Berthier theorem for the Heisenberg group.

\begin{theorem}\label{th7}
Let $A\subset\mathbb C^n$ be a set of finite Lebesgue measure. Suppose $f\in L^1(\mathbb{H}^n)$
and $\{(z,t)\in \mathbb{H}^n:f(z,t)\neq 0\} \subseteq A\times \mathbb{R}.$ If $\hat{f}(\lambda)$
is a finite rank operator for each $\lambda\in \mathbb{R}^\ast$, then $f=0$.
\end{theorem}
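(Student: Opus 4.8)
The plan is to pass from the group Fourier transform to the Weyl transform fibre by fibre, reduce the statement to the identity $E_A\cap F_N=0$ already established in Proposition \ref{prop2}, and then reassemble the fibres by a Fourier-inversion argument in the central variable. Recall that $\hat f(\lambda)=W_\lambda(f^\lambda)$, where $f^\lambda(z)=\int_{\mathbb R}f(z,t)e^{i\lambda t}\,dt$. Since $\{(z,t):f(z,t)\neq0\}\subseteq A\times\mathbb R$, Fubini gives $f^\lambda\in L^1(\mathbb C^n)$ with $\{z:f^\lambda(z)\neq0\}\subseteq A$ for every $\lambda\in\mathbb R^\ast$; in the projection language of (\ref{exp16}) this says $E_Af^\lambda=f^\lambda$. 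Thus I would fix $\lambda\in\mathbb R^\ast$ and aim to prove $f^\lambda=0$. The theorem then follows because, for almost every $z$, the map $\lambda\mapsto f^\lambda(z)$ is the continuous Fourier transform of $f(z,\cdot)\in L^1(\mathbb R)$; testing on a countable dense set of $\lambda$'s and using this continuity, its vanishing on $\mathbb R^\ast$ (and hence at $0$) forces $f(z,\cdot)=0$ a.e., whence $f=0$.

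The first genuine step is to upgrade $f^\lambda$ from $L^1$ to $L^2$, which is not automatic even though $f^\lambda$ is supported in the finite-measure set $A$. By hypothesis $W_\lambda(f^\lambda)=\hat f(\lambda)$ has finite rank, hence is Hilbert--Schmidt. Writing the matrix entries of $W_\lambda(f^\lambda)$ against the scaled Hermite basis $\{\phi_\alpha^\lambda\}$ realizes them, up to a constant, as the special Hermite coefficients $\langle f^\lambda,\phi_{\alpha\beta}^\lambda\rangle$ appearing in (\ref{exp31}); the finiteness of $\|\hat f(\lambda)\|_{HS}^2$ therefore says these coefficients are square-summable. Let $h\in L^2(\mathbb C^n)$ be the $L^2$ function with exactly those coefficients. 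Then $f^\lambda-h$ is orthogonal to every special Hermite function, and since the span of $\{\phi_{\alpha\beta}^\lambda\}$ is dense in the Schwartz space while $f^\lambda-h$ is a tempered distribution, we conclude $f^\lambda=h\in L^2(\mathbb C^n)$. Equivalently, one may invoke the Weyl--Plancherel identity $|\lambda|^{n/2}\|W_\lambda(g)\|_{HS}=(2\pi)^{n/2}\|g\|_2$ together with injectivity of $W_\lambda$ to identify $f^\lambda$ with the unique $L^2$ preimage of the Hilbert--Schmidt operator $\hat f(\lambda)$.

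With $f^\lambda\in L^2(\mathbb C^n)$ in hand, I would reduce to $\lambda=1$ by the scaling noted in Section \ref{section2}: a dilation $z\mapsto\sqrt{|\lambda|}\,z$ on $\mathbb C^n$, conjugated by the corresponding scaling unitary on $L^2(\mathbb R^n)$, intertwines $W_\lambda$ with $W_{\pm1}$, preserves the rank of the operator, and carries the support of $f^\lambda$ into $\sqrt{|\lambda|}\,A$, which still has finite Lebesgue measure (the case $\lambda<0$ being identical after replacing $W$ by the conjugate transform $W_{-1}$, for which the construction of Section \ref{section} runs verbatim). It thus suffices to treat $g:=f^1$. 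Now $g$ is supported in a set of finite measure, so $E_Ag=g$, i.e. $g\in\mathcal R(E_A)$; and $W(g)$ has finite rank $\leq N$, so choosing the orthonormal basis adapted to $\mathcal R(W(g))$ as in (\ref{exp16}) gives $\mathcal R(W(g))\subseteq\mathcal B_N$, i.e. $g\in\mathcal R(F_N)$. Hence $g\in\mathcal R(E_A)\cap\mathcal R(F_N)=\mathcal R(E_A\cap F_N)$, which is $\{0\}$ by Proposition \ref{prop2}. Therefore $g=0$, so $f^\lambda=0$ for every $\lambda\in\mathbb R^\ast$, and the reassembly of the first paragraph yields $f=0$.

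I expect the delicate point to be the $L^1\to L^2$ promotion of $f^\lambda$: only the finite-rank (hence Hilbert--Schmidt) hypothesis is available, and one must argue carefully that the merely integrable function $f^\lambda$ actually lies in $L^2(\mathbb C^n)$ before the Hilbert-space projection machinery of Section \ref{section}, which is built entirely on $L^2(\mathbb C^n)$, can be applied. The remaining bookkeeping, namely the uniformity in $\lambda$ handled by scaling and the passage from ``$f^\lambda=0$ for each fixed $\lambda$'' to ``$f=0$'' through continuity of $\lambda\mapsto f^\lambda(z)$, is routine once this membership is secured.
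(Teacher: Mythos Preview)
Your proof is correct and follows essentially the same route as the paper: reduce to the Weyl transform fibre by fibre (the paper's Proposition~\ref{prop1}), promote $f^\lambda$ from $L^1$ to $L^2$ via the finite-rank/Hilbert--Schmidt hypothesis, scale to $\lambda=1$, and invoke Proposition~\ref{prop2}. The paper compresses all of this into a few lines---dispatching the $L^1\to L^2$ step with a one-sentence appeal to the Plancherel theorem for the Weyl transform and leaving the reassembly in $t$ implicit---whereas you spell out the special-Hermite-coefficient argument and the countable-dense-set continuity trick, but the logical skeleton is identical.
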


In order to prove Theorem \ref{th7}, it is sufficient to prove the following result
for the Weyl transform $W_\lambda.$

\begin{proposition}\label{prop1}
Let $g\in L^1(\mathbb{C}^n)$ and $\{z\in \mathbb{C}^n:g(z)\neq 0\} \subseteq A,$
where $m(A)$ is finite. Let $\lambda\in \mathbb{R}^*$ and $W_\lambda(g)$ has
finite rank. Then $g=0$.
\end{proposition}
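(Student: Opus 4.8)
The plan is to reduce Proposition \ref{prop1} to the Hilbert-space machinery already developed for the Weyl transform at $\lambda=1$. First I would dispose of the parameter $\lambda$: by the scaling argument recorded in Section \ref{section2} (it suffices to study the twisted convolution, hence the Weyl transform, for $\lambda=1$), a dilation of $g$ converts the hypothesis that $W_\lambda(g)$ has finite rank and $g$ is supported in a finite-measure set $A$ into the same hypothesis for $W(g)=W_1(\tilde g)$ with $\tilde g$ supported in a (rescaled, still finite-measure) set. So without loss of generality I take $\lambda=1$ and write $W=W_1$.

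Next I would package the two hypotheses as membership in the ranges of the two projections introduced in (\ref{exp16}). Since $g$ is supported in $A$, we have $E_A g=g$, i.e. $g\in\mathcal R(E_A)$. Since $W(g)$ has finite rank, its range is contained in some finite-dimensional space $\mathcal B_N=\text{span}\{e_1,\dots,e_N\}$; by the very definition $W(F_N g)=P_N W(g)=W(g)$, and injectivity of the Weyl transform gives $F_N g=g$, so $g\in\mathcal R(F_N)$. Therefore $g\in\mathcal R(E_A)\cap\mathcal R(F_N)=\mathcal R(E_A\cap F_N)$. At this point I would simply invoke Proposition \ref{prop2}, which asserts $E_A\cap F_N=0$ for any $A$ of finite measure, to conclude $g=0$.

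The only genuine subtlety is that the orthonormal basis $\{e_1,e_2,\dots\}$ and the integer $N$ defining $F_N$ were fixed at the start of Section \ref{section} to diagonalize one particular range; here the finite-dimensional space containing $\mathcal R(W(g))$ is given by the hypothesis and may be arbitrary. I would address this by noting that $F_N$ and Proposition \ref{prop2} are really statements about an arbitrary finite-dimensional subspace $S$ of $L^2(\mathbb R^n)$: Proposition \ref{prop4} is phrased for a general closed subspace $S$ via $F_S$, and the Hilbert-Schmidt bound of Lemma \ref{lemma6} depends on $S$ only through $N=\dim S$. Thus for the particular $N$-dimensional range space arising from $g$, the corresponding projection $F_S$ satisfies $E_A\cap F_S=0$ by the same argument (finite-dimensionality of $\mathcal R(E_{\tilde A}\cap F_S)$ forces the alternative in Proposition \ref{prop4} to fail), and $g\in\mathcal R(E_A\cap F_S)=0$.

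I do not expect any serious obstacle in the calculation itself, since all the analytic work — the kernel computation (Lemma \ref{lemma2}), the Hilbert-Schmidt norm identity $\|E_AF_N\|_{HS}^2=(2\pi)^{-n}m(A)N$ (Lemma \ref{lemma6}), the Amrein-Berthier measure-theoretic lemma (Lemma \ref{lemma6.3}), and the dichotomy of Proposition \ref{prop4} — has already been established. The main conceptual point to get right is the reduction step, namely verifying that ``$W(g)$ finite rank'' is exactly equivalent to ``$g\in\mathcal R(F_S)$ for some finite-dimensional $S$'' and that Proposition \ref{prop2} applies uniformly to every such $S$; once that is cleanly stated, the proof of Proposition \ref{prop1} is immediate, and Theorem \ref{th7} then follows by applying Proposition \ref{prop1} to $g=f^\lambda$ for each $\lambda\in\mathbb R^\ast$, using $\hat f(\lambda)=W_\lambda(f^\lambda)$, to get $f^\lambda=0$ for all $\lambda$ and hence $f=0$.
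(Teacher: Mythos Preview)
Your plan is essentially the paper's own argument: reduce to $\lambda=1$, recognize $g$ as an element of $\mathcal R(E_A)\cap\mathcal R(F_N)$, and invoke Proposition~\ref{prop2}. The one step you skip, which the paper makes explicit, is the passage from $g\in L^1(\mathbb C^n)$ to $g\in L^2(\mathbb C^n)$: the projections $E_A,F_N$ of (\ref{exp16}) and hence Proposition~\ref{prop2} live on $L^2$, so before you can write $F_Ng=g$ you must observe that a finite-rank $W(g)$ is automatically Hilbert--Schmidt and then the Plancherel identity $\|g\|_2=(2\pi)^{-n/2}\|W(g)\|_{HS}$ forces $g\in L^2(\mathbb C^n)$. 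With that sentence inserted, your argument is complete and coincides with the paper's.
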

Since $W_\lambda(g)$ is a finite rank operator, by the Plancheral theorem for the
Weyl transform, $g\in L^2(\mathbb{C}^n)$. Thus, it is enough to prove Proposition \ref{prop1}
for $g\in L^2(\mathbb{C}^n)$ and $\lambda=1.$ Note that Proposition \ref{prop1} follows from
Proposition \ref{prop2}.

\begin{remark}
As similar to the Euclidean set up, for strong annihilating pair (SAP)
\cite{HJ}, Proposition \ref{prop1} leads to the following concept of
strong annihilating pair for the Weyl transform, which appears in terms
of support of the function and rank of its Weyl transform.

\smallskip

{\em Let $A$ be a measurable subset of $\mathbb{C}^n$ and $P_S$ be the orthogonal
projection of $L^2(\mathbb{R}^n)$ onto a closed subspace $S$ of $L^2(\mathbb{R}^n).$
We call $(A,S)$ a SAP for the Weyl transform $W$ if there exists a positive
number $C=C(A,S)$ such that
\[\|g\|_2^2\leq C \left(\|g\|_{L^2(A^c)} +\|P_S^{\perp} W(g) \|_{HS}^2 \right)\]
for every $g \in L^2(\mathbb{C}^n).$}
\smallskip

In a similar way as to the Euclidean case, it can be shown that
if $A$ has finite measure and the dimension of $S$ is finite, then $(A,S)$
is a SAP for $W.$
\end{remark}

\begin{remark}\label{rk3} $(a)$
Let $G$ be a step two nilpotent Lie group (without MW condition). Then $G,$ as a set,
can be realized by $\mathbb{R}^{2n}\times\mathbb{R}^r\times\mathbb{R}^k.$ Let $\Lambda$
be the set of all irreducible unitary representation of $G$ those participate in the
Plancherel formula. Then $\Lambda$ can be identified with $\mathbb{R}^k\times\mathbb{R}^r.$
For unexplained details about step two nilpotent Lie group, we refer to \cite{CG,PS_00,PT,R}.

A similar proof as to the Heisenberg group leads to the following analogue of
Benedicks-Amrein-Berthier theorem on $G.$

\smallskip

\noindent{\em Statement: Let $A\subset\mathbb R^{2n}$ be a set of finite Lebesgue measure.
Suppose $f\in L^1(G)$ and $\{(x,y,t)\in G: f(x,y,t)\neq0\}\subseteq A\times\mathbb R^r\times\mathbb R^k.$ If $\hat{f}(\lambda,\mu)$
is a finite rank operator for each $(\lambda,\mu)\in\Lambda,$ then $f=0.$}
\end{remark}

$(b)$ Let $\mathcal E$ be a set of finite measure in $G.$ If $\{(x,y,t)\in G:f(x,y,t)\neq0\}\subseteq\mathcal E$
for some $f\in L^1(G),$ then it is natural to ask, whether there exists a nonzero function
$f\in L^1(G)$ such that $\hat{f}(\lambda,\mu)$ has finite rank for every choice of $\lambda,\mu\in\Lambda.$
Since $G\cong\mathbb R^{2n+r+k},$ if the projection of $\mathcal E$ into $\mathbb{R}^{2n}$ has
finite measure, then by Remark \ref{rk3}$(a)$ we get $f=0.$ However, the other case is still open.

\bigskip

\noindent{\bf Acknowledgements:}
The authors wish to thank E.K. Narayanan and Rama Rawat for some fruitful
suggestion and remarks. The first author gratefully acknowledges the support
provided by IIT Guwahati, Government of India.

\bigskip


\end{document}